\newcommand{\dvol}{\,d\operatorname{vol}}
\newtheorem{theorem}{Theorem}[section]
\newtheorem{lemma}[theorem]{Lemma}
\newtheorem{corollary}[theorem]{Corollary}
\newtheorem{proposition}[theorem]{Proposition}
\theoremstyle{definition}
\newtheorem{definition}[theorem]{Definition}
\newtheorem{example}[theorem]{Example}
\newtheorem{notation}[theorem]{Notation}
\theoremstyle{remark}
\newtheorem{remark}[theorem]{Remark}
\renewcommand\sout{\bgroup\markoverwith
{\textcolor{red}{\rule[0.7ex]{3pt}{1.4pt}}}\ULon}
\newcommand\seq{\, = \,}
\newcommand\define{\mathrel{\ := \ }}
\newcommand\ede{\define}
\newcommand\esssup{\operatorname{ess-sup}}
\newcommand\dist{\operatorname{dist}}
\newcommand\Hom{\operatorname{Hom}}
\newcommand\End{\operatorname{End}}
\newcommand\Def{\operatorname{Def}\,}
\newcommand{\rinj}{\mathop{r_{\mathrm{inj}}}}
\newcommand{\CC}{\mathbb C}
\newcommand{\NN}{\mathbb N}
\newcommand{\RR}{\mathbb R}
\newcommand{\ZZ}{\mathbb Z}
\newcommand{\maD}{\mathcal D}
\newcommand{\maO}{\mathcal O}
\newcommand{\CI}{{\mathcal C}^{\infty}}
\newcommand{\CIc}{{\mathcal C}^{\infty}_{\text{c}}}
\newcommand\pa{\partial}
\newcommand\dimM{{n}}
\newcommand{\tps}[2]{\texorpdfstring{$#1$}{#2}}
\numberwithin{equation}{section}
\begin{document}

\title[Bounded geometry and the deformation operator]{The
$L^2$-unique continuation property on manifolds with bounded
geometry and the deformation operator}


\author[N. Gro{\ss}e]{Nadine Gro{\ss}e} \address{Mathematisches
  Institut, Universit\"at Freiburg, 79104 Freiburg, Germany}
\email{nadine.grosse@math.uni-freiburg.de}

\author[M. Kohr]{Mirela Kohr}
\address{Faculty of Mathematics and Computer Science,
Babe\c{s}-Bolyai University, 1 M. Kog\u{a}l\-niceanu Str., 400084
Cluj-Napoca, Romania} \email{mkohr@math.ubbcluj.ro}

\author[V. Nistor]{Victor Nistor} \address{Universit\'{e} de Lorraine,
  UFR MIM, Ile du Saulcy, CS 50128, 57045 METZ, France and
  Inst. Math. Romanian Acad.  PO BOX 1-764, 014700 Bucharest Romania}
\email{victor.nistor@univ-lorraine.fr}

\thanks{M.K. has been partially supported by
  the Babe\c{s}-Bolyai University Grant SEED/2022. N.G. has been partially supported by SPP2026 (DFG)}

\date\today

\subjclass[2000]{Primary 35R01; Secondary 35B65, 76M99.}

\date{\today}

\keywords{
Sobolev spaces, elliptic and uniformly strongly elliptic operators, invertibility,
Hadamard well-posedness, regularity estimates, unique continuation, Fredholm operators.
}
\dedicatory{In memory of Professor Gabriela Kohr, with deep respect}

\begin{abstract}
  A differential operator $T$ satisfies the $L^2$-unique continuation property if
  every $L^2$-solution of $T$ that vanishes on an open subset vanishes
  identically. We study the $L^2$-unique continuation property of an
  operator $T$ acting on a manifold with bounded geometry. In particular,
  we establish some connections between this property and the regularity
  properties of $T$. As an application, we prove that the deformation
  operator on a manifold with bounded geometry satisfies regularity and
  $L^2$-unique continuation properties. As another application, we prove that
  suitable elliptic operators are invertible (Hadamard well-posedness).
  Our results apply to compact manifolds, which have bounded geometry.
\end{abstract}

\maketitle
\tableofcontents

\section{Introduction}
\label{sec1}

Let $M$ be a Riemannian manifold and $E, F \to M$ be Hermitian vector bundles
and $T$ be differential operator acting from sections of $E$ to sections of $F$.
We shall study the following \emph{weak form} of the unique continuation property
(see below for references to the usual unique continuation property):

\begin{definition} \label{def-L2-cont}
  We shall say that $T$ has the \textit{ $L^2$-unique continuation property} if, 
  given $u \in L^2(M; E)$ that vanishes in an open subset $U \subset M$ and 
  satisfies $T  u \seq 0$, then $u \seq 0$  \emph{everywhere} on $M$.
\end{definition}

In our results we shall assume that the manifold $M$ has \emph{bounded geometry}
(Definition \ref{def_bdd_geo_no_bdy}). 
The main results of this paper relate are:
\begin{itemize}
  \item Results relating the $L^2$-unique continuation property of $T$ 
  to the regularity properties of the operator $T$.

  \item A proof that the deformation operator $\Def$ and the 
  deformation Laplacian $\mathbf{L} := 2\operatorname{Def}^*\Def$ have
  the $L^2$-unique continuation property.
  
  \item A proof that certain PDEs (including some of the form $\mathbf{L} + V$) 
  are well-posed in the sense of Hadamard. 
\end{itemize}

Let us first discuss \emph{the connections between the $L^2$-unique continuation property
and regularity.} The result that we need and prove is the following extension of a regularity result 
from \cite{GN17}, which in turn extends the classical regularity result on compact manifolds
(our convention is that $\NN = \{1, 2, \ldots\}$ and $\ZZ_+ = \NN \cup \{0\}$):

\begin{theorem}\label{thm.reg.D}
  Let $s, \rho \in \ZZ_+$, $s \le \rho$, and $t \in \RR$.
  Let $M$ be a manifold with bounded geometry and $E, F \to M$ be two Hermitian
  vector bundles with totally bounded curvature (Definition~\ref{def_ttly_bdd_curv}).
  Let $P$ be a second order, uniformly strongly elliptic differential operator with coefficients
  in $W^{\rho+3, \infty}$ acting on sections of $E$ to sections of $F$.
  Then there exists $c_{s,t} > 0$ such that, if $u \in H^{t}(M; E)$ and $Pu \in H^{s}(M; F)$, then
  $u \in H^{s + 2}(M; E)$ and
  \begin{equation*}
    \| u \|_{H^{s + 2}(M; E)} \leq c_{s,t}
    \left( \|P u\|_{H^{s}(M; F)} + \| u \|_{H^{t}(M; E)} \right).
  \end{equation*}
\end{theorem}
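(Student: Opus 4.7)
The plan is to reduce the theorem to a local elliptic regularity statement on Euclidean balls and then assemble the global estimate via a uniformly locally finite partition of unity, following the line of \cite{GN17}. On a bounded geometry manifold one has the continuous inclusion $H^{t}(M;E)\subset H^{0}(M;E)$ for $t\geq 0$, so the case $t\geq 0$ is an immediate consequence of the $L^2$-version from \cite{GN17} together with the trivial bound $\|u\|_{H^{0}}\leq \|u\|_{H^{t}}$. The substantive content is therefore the extension to $t<0$, which I would handle either by running the parametrix argument of \cite{GN17} uniformly on $M$ and reading off the estimate from the mapping properties of the smoothing remainder, or equivalently by a finite bootstrap $H^{t}\to H^{t+2}\to\cdots\to H^{s+2}$ via local elliptic regularity.

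Concretely, I would fix a uniformly locally finite cover $\{B_i\}$ of $M$ by geodesic balls of radius strictly less than the injectivity radius, together with a subordinate partition of unity $\{\chi_i\}$ whose derivatives are uniformly bounded to all orders; both constructions are standard on manifolds with bounded geometry. The bounded geometry of $M$ and the totally bounded curvature hypothesis on $E,F$ ensure that, in the corresponding synchronous frames, $P$ becomes a second order uniformly strongly elliptic operator on a Euclidean ball whose coefficients are uniformly bounded in $W^{\rho+3,\infty}$ with constants independent of $i$. On each such ball, the standard interior elliptic estimate gives
\begin{equation*}
  \|u\|_{H^{s+2}(B'_i)}\,\leq\, C\bigl(\|Pu\|_{H^{s}(B_i)}+\|u\|_{H^{t}(B_i)}\bigr)
\end{equation*}
for $B'_i\Subset B_i$, where $C$ depends only on the ellipticity constant, the $W^{\rho+3,\infty}$ norm of the coefficients, and the chosen geometry, and is therefore uniform in $i$. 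The coefficient regularity $W^{\rho+3,\infty}$ combined with $s\leq\rho$ is precisely what is needed to carry out the local estimate up to order $s+2$.

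Summing the squares of the local estimates applied to $\chi_i u$, and using the partition-of-unity characterization of $H^{s+2}(M;E)$ on bounded geometry manifolds together with the uniform local finiteness of the cover, yields the desired global bound. The main obstacle to watch is the bookkeeping of the commutators $[P,\chi_i]$ when $t<0$: these produce terms one order below the a priori Sobolev class of $u$. One must therefore either invoke a version of the local interior estimate that already tolerates arbitrarily negative $t$ (available since $P$ is a differential, hence local, operator) or perform the bootstrap above in finitely many steps, keeping all constants uniform across $i$ throughout. Ensuring this uniformity, together with the fact that the $W^{\rho+3,\infty}$ hypothesis on the coefficients is strong enough to absorb the commutators up to order $s+2$, is where the full strength of the bounded geometry hypotheses and of the extension beyond \cite{GN17} enters.
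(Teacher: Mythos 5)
Your proposal is mathematically plausible but follows a genuinely different route from the paper. The paper's proof is much shorter: it (1) rewrites $P$, which has coefficients in $W^{\rho+3,\infty}$, as an operator in \emph{divergence form} with coefficients in $W^{\rho+2,\infty}$ (this conversion is the reason for the ``$+3$''), (2) invokes \cite[Theorem~5.12]{GN17}, which already gives the desired estimate for $t=1$ and divergence-form operators, (3) for $t>1$ concludes trivially from monotonicity of Sobolev norms, and (4) for $t<1$ applies the Rellich--Kondrachov-type estimate of Proposition~\ref{prop.l.reg}(ii) with $p=p'=q=2$, $s'=s+2$, i.e. $\|u\|_{H^1}\le\epsilon\|u\|_{H^{s+2}}+C_\epsilon\|u\|_{H^t}$, and absorbs the $\epsilon$-term into the left-hand side after choosing $\epsilon c_{s,1}<1$. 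You instead propose to re-derive the estimate from scratch via uniform local interior estimates in synchronous trivializations assembled by a partition of unity, with a bootstrap or parametrix argument for negative $t$ --- which is essentially re-proving the content of \cite{GN17} rather than citing it. Both routes lead to the result; the paper's is shorter because it reuses the cited theorem as a black box and the absorption argument in place of your iterated bootstrap, and in particular it avoids the commutator bookkeeping you flag as the main obstacle. Two points your proposal misses: you do not explain why the hypothesis is $W^{\rho+3,\infty}$ rather than $W^{\rho+1,\infty}$ or $W^{\rho+2,\infty}$ --- this extra order is exactly what the divergence-form conversion consumes --- and you do not use Proposition~\ref{prop.l.reg}(ii), which is the tool the paper proves precisely to handle the passage from $t=1$ to $t<1$. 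Also note a small inaccuracy: \cite{GN17} is cited for $t=1$ (not $t=0$), because that paper works on manifolds with boundary; the threshold for your ``easy case'' should therefore be $t\ge 1$, not $t\ge 0$.
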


Related results for \emph{scalar differential operators with smooth
coefficients} were obtained in \cite{kordyukovLp1, kordyukovLp2} using the
calculus of uniform pseudodifferential operators on manifolds with bounded geometry.
(More regularity results will be included in \cite{GKN-23}, so we are not striving in this 
paper for the greatest generality.)
The proof of Theorem~\ref{thm.reg.D} is based on the regularity result in \cite{GN17} and
a generalization of the classical Rellich-Kondrachov Theorem, which, in the case of manifolds
with bounded geometry, is an embedding result, see Proposition~\ref{prop.l.reg}. Our proof
uses a description of Sobolev spaces using partitions of unity \cite{GrosseSchneider}, which
we also recall. The regularity result allows us to prove that certain differential
operators are closed or that they are self-adjoint.

The main application in this paper is to use Theorem~\ref{thm.reg.D} to
prove that the \emph{deformation operator} on a manifold with
bounded geometry satisfies \emph{regularity and $L^2$-unique
continuation properties.} Another application is to prove that certain 
elliptic operators are invertible (i.e. are well-posed in the sense of
Hadamard). In turn, these results on the deformation operator
are useful for the study of the Stokes system on manifolds with bounded 
geometry \cite{KohrNistorWendland1}.

As we have alluded to above, this paper uses a weaker version of the
unique continuation property than the usual one. Thus, in fact, this
paper is more about regularity, self-adjointness, and well-posedness of
elliptic operators than about their unique continuation property. The unique 
continuation property (in its strong form) is a \emph{very important topic} 
in mathematics. Its origins go back at least to classical results of 
Holmgren and Carleman at the very beginning of the twentieth century. 
It is hard to give a complete list of references. See 
however the following very incomplete lists
\cite{Hormander1, LebeauBook, LernerBook, SoggeFourier, TataruNotes, Taylor1, TrevesBookPDE} 
for some general references as well as the following research 
papers \cite{KenigSogge2, Choulli, Salo, Garofalo, KenigSogge1, KochTataru, LebeauCarleman,
	LernerUC, TataruUC} and the references therein for more information.

The paper is organized as follows. In Section~\ref{sec2}, we set up
notation and we recall the definition of the Sobolev spaces and of
the differential operators that we will use. That sections contains only
standard material. In Section~\ref{sec3}, we recall the definition
of manifolds with bounded geometry as well as the equivalent
definition of Sobolev spaces on such manifolds using suitable
partitions of unity. We also prove our Rellich-Kondrachov embedding
result, Proposition~\ref{prop.l.reg}.
Section~\ref{sec4} is devoted to Theorem~\ref{thm.reg.D}
and some direct applications. Section~\ref{sec5} contains our application,
which is that the deformation operator $\Def$ and the deformation Laplacian
$\mathbf{L}\define 2\operatorname{Def}^*\Def$ have the $L^2$-unique
continuation property. We also include there an invertibility result
for certain classes of operators that include certain operators of the 
form $\mathbf{L} + V$.

\section{Background material and notation}
\label{sec2}

We begin with some background material, for the benefit of the reader.
More precisely, we introduce here the needed function spaces and
their associated differential operators with the definitions used
in this paper. Since we are interested in
systems, all our functions spaces and differential operators will be
defined using sections of vector bundles. We also use this opportunity to
fix the notation. This section contains no new results.
See \cite{GN17} and \cite{KohrNistor1} for more details.

\emph{Throughout this paper, $M$ will be a (usually non-compact) connected
smooth Riemannian manifold without boundary with metric $g$.}
We use the convention that ${\mathbb Z}_+ = \{0, 1, 2, \ldots, \}$ and
${\mathbb N}={\mathbb Z}_+ \smallsetminus \{0\}$.

\subsection{Vector bundles, connections, and tensor products}
Let $E \to M$ denote a smooth (real or complex) vector bundle endowed with a
fiberwise metric $(\, ,\, )_E$ over our Riemannian manifold $M$ and let $\CI(M; E)$
be the set of \emph{smooth} sections of $E$. Unless otherwise stated, we assume
that $E$ is endowed with a metric preserving connection
\begin{equation*}
  \nabla^E \colon \CI(M; E) \to \CI(M; E \otimes T^{*}M)\,.
\end{equation*}
Let $F \to M$ be another vector bundle with connection $\nabla^F$. Then
$E \otimes F$ will be endowed with the product connection $\nabla^{E
  \otimes F}(\xi \otimes \eta) \define (\nabla^{E} \xi) \otimes \eta +
\xi \otimes \nabla^{F}\eta$. We will use the notation
$V^{\otimes k} \define V \otimes V \otimes \ldots \otimes V$ for the tensor product
of $V$ with itself of $k$-times. In particular, all the tensor product
bundles $E \otimes T^{*\otimes k}M \define E \otimes (T^*M)^{\otimes k}$ will
be endowed with the induced tensor product connections  from $E$
and from the Levi-Civita connection on $TM$. (Recall that the Levi-Civita
connection on $TM$ is the unique metric preserving torsion free connection
on $TM$ \cite{Petersen}.)

When there is no risk of confusion, we shall drop the notation for the vector
bundle $E$ from the notation of its connection $\nabla^E$ and write simply $\nabla$
instead.

\subsection{Covariant Sobolev spaces}
\label{ssec.covariant}
Let us recall now the definition of the covariant Sobolev spaces (or
$\nabla$-Sobolev spaces) on Riemannian manifolds. These spaces are defined using
connections, see, for instance \cite{AGN1, HebeyBook, HebeyRobert, KohrNistor1}.

\subsubsection{Positive integer order covariant Sobolev spaces}
Let $p\in [1,\infty]$ and $E \to M$ be a given Hermitian vector bundle with
$\|\cdot \|_E$ denoting the fiberwise norm on $E$.
Let $\dvol_g$ denote the induced volume form on $M$ (defined using
the given metric $g$ on $M$).
We then have the usual formula for the $L^p$-norm:
\begin{equation*}
  \|u\|_{L^p(M; E)} \ede\,
  \begin{cases}
    \ \left(\int_{M} \|u(x)\|_E^p \dvol_g(x)\right)^{1/p}\,, &
    \quad \mbox{if } \ p < \infty \\
    \ \, \esssup_{x \in M}\, \|u(x)\|_E  \,, &
    \quad \mbox{if } \  p = \infty\,.
  \end{cases}
\end{equation*}
Identifying two sections of $E$ that are equal on $M$ except on
a zero measure set, we then obtain the usual Lebesgue spaces
\begin{equation*}
  L^p(M; E) \ede \{ u \colon M \to E \mid \|u\|_{L^p(M; E)}
  < + \infty \}/\ker (\|\cdot \|_{L^p(M; E)})\,.
\end{equation*}
Let $(f, g) = (f, g)_{L^2(M; E)}$ denote the scalar (inner) product on $L^2(M; E)$.
It is linear in the first variable and conjugate linear in the second one.

\begin{definition}\label{def.Sobolev}
Let $E \to M$ be a finite dimensional, Hermitian vector bundle with metric preserving connection
$\nabla = \nabla^E$. Let $k \in {\mathbb Z}_+$ and $p\in [1,\infty ]$. Then
\begin{equation*}
  W^{k,p}(M; E) \ede \{u \in L^p(M; E) \mid \nabla^ju \in
  L^p(M; E \otimes T^{*\otimes j} M) \,,\, \mbox{ for }\, 1 \le j \le
k \}
\end{equation*}
is the \emph{order $k$, $L^p$-type covariant Sobolev space} of
sections of $E$ with the norm
\begin{equation*}
\|u\|_{W^{k,p}(M; E)} \ede \ell^p\mbox{--norm of } \{
\|\nabla^j (u)\|_{L^p(M; E \otimes T^{* \otimes j} M)}\,,\ 0 \le j \le k\} \,.
\end{equation*}
We shall write also $H^k(M; E) \define W^{k, 2}(M; E)$.
\end{definition}

The adjective ``covariant'' in ``covariant Sobolev spaces'' (a term coined in \cite{KohrNistor1})
serves to remind us that these spaces depend on the choice of the covariant derivative $\nabla^E$
on the coefficient bundle $E$. In \cite{KohrNistor1}, the covariant Sobolev spaces
are also called $\nabla$-Sobolev spaces. Since we use only canonical connections and
work on manifolds with bounded geometry, these spaces
coincide with the usual Sobolev spaces, so we shall drop the connection $\nabla$ or the adjective
``covariant'' when talking about these spaces.

\subsubsection{Negative order covariant Sobolev spaces}
We proceed as usual and we define the negative order (covariant) Sobolev spaces for
$p \in [1, \infty)$ by
\begin{equation*}
  W^{-k,p'}(M; E^*) \ede W^{k,p}(M; E)^* \,,
\end{equation*}
where $V^*$ is the complex conjugate dual of $V$, $k\in \NN$,
and $\frac{1}{p}+\frac{1}{p'}=1$. For simplicity, in the following, we
shall identify $E^*$ with $E$ using the Hermitian metric on $E$, so
$W^{-k,p'}(M; E^*) = W^{-k,p'}(M; E)$. We set $W^{\infty, p}(M; E) \define \cap_{k \in \NN} W^{k, p}(M; E)$
(which we shall use only for $p = \infty$ in this paper). The spaces $H^s(M; E)$,
$s \in \RR$, are defined by complex interpolation.

\subsection{Differential operators}\label{sec:gd}

Let $E, F \to M$ be vector bundles, with $E$ endowed with a connection.
A \emph{differential operator acting on sections of $E$ with values sections of $F$}
is an expression of the form
\begin{equation}\label{eq.def.do}
  Pu \ede \sum_{j=0}^\mu a_j \nabla^ju\,,
\end{equation}
with $a_j$ a section of $\Hom(E \otimes T^{*\otimes j} M; F)$. Let us
assume $E = F$ in what follows, for simplicity. Then $\Hom(E \otimes T^{*\otimes j} M; E)
\simeq \End(E) \otimes TM^{\otimes j}$, which simplifies the notation, so, in
the following, we shall identify these spaces.
Let $\rho \in \ZZ_+ \cup \{\infty\}$. We shall say that the operator 
$P$ of Equation~\eqref{eq.def.do} \emph{has coefficients in} $W^{\rho, \infty}$ if $a_j \in
W^{\rho, \infty}(M; \End(E) \otimes TM^{\otimes j})$ for all $0 \leq j\leq \mu$.
A differential operator $P = \sum_{j=0}^\mu a_j \nabla^j$
with coefficients in $W^{\rho, \infty}$ defines a \emph{continuous map}
\begin{align*}
  P \seq \sum_{j=0}^\mu a_j \nabla^j \colon W^{s+\mu, p}(M; E) \, \to \,
  W^{s, p}(M; E), \quad 0 \le s \le \rho\ \mbox{ and } \
  p \in [1, \infty] \,.
\end{align*}
See \cite{AGN1, GN17, KohrNistor1} for details and further results. It will be
convenient to use the following notation for our sets of differential operators.

\begin{definition}\label{def.Dmr}
Let $\mu \in \ZZ_+$, $\rho \in \ZZ_+ \cup \{\infty\}$, and let
$E, F\to M$ be Hermitian vector bundles with metric preserving connections.
Then $\maD^{\mu, \rho}(M;E, F)$ will denote the set of
differential operators of order $\le \mu$ with coefficients in $W^{\rho,\infty}$
acting from sections of $E$ to sections of $F$. We let $\maD^{\mu, \rho}(M;E)
\define \maD^{\mu, \rho}(M;E, E)$.
\end{definition}

In \cite{KohrNistor1}, the differential operators introduced in this section were also
called ``$\nabla$-differential operators.'' Locally, there is no difference between
the $\nabla$-differential operators and the ``usual'' differential operators. In
particular, the principal symbol is defined in the same way.

\section{Manifolds with bounded geometry}
\label{sec3}

We now recall some basic material on manifolds with
bounded geometry following \cite{AGN1}, to which we refer for more
details and references. Once we will have recalled the definition of
manifolds with bounded geometry, we will start assuming that our given
Riemannian manifold $(M, g)$ is of bounded geometry.

\subsection{Definition of bounded geometry} We now recall the definition
of a ``manifold with bounded geometry.''
Let $M$ be a Riemannian manifold with metric $g$,
as usual. We let $\dist_g(x, y)$ denote the distance between two points $x,y\in M$.

\begin{definition}\label{def_ttly_bdd_curv}
A Hermitian vector bundle $E \to M$ with metric preserving connection is said to
have \emph{totally bounded curvature} if its curvature $R^E$
and all covariant derivatives $\nabla^k R^E$, $k \in \NN$, are bounded on $M$. Also,
we shall say that $M$ has \emph{totally bounded curvature}
if the tangent bundle $TM\to M$ has totally bounded curvature.
\end{definition}

We shall use the following standard notation.

\begin{notation}\label{not.standard}
  Let us assume $M$ to be complete in its metric $g$, for simplicity.
  \begin{enumerate}[(i)]
    \item $\exp_x^M\colon T_xM \to M$ will be the geodesic exponential map at $x$.

    \item $B_x^M(r) \define \{y \in M \mid \dist_g(x, y) < r\}$,
    the ball of radius $r$ in $M$ centered at $x$.

    \item $\rinj(x)\! \define\!\! \sup\{ r \mid \exp_x^M \colon B_r^{T_x M}(0) \to M
    \text{ is a diffeomorphism onto its image} \}$.
 
     \item $\rinj(M) \define \inf_{x \in M} \rinj(x)$, the \emph{injectivity radius} of $M$.
  \end{enumerate}
\end{notation}

The concept of manifolds with bounded geometry (see, for instance, \cite{Aubin, AmannFunctSp, 
Browder61, kordyukovLp2, ShubinAsterisque, Skrz, SkAtomic, TriebelBG}) is classical and fundamental 
for this paper.

\begin{definition}\label{def_bdd_geo_no_bdy}
Let $(M,g)$ be a boundaryless Riemannian manifold. Then $M$ has \textit{bounded geometry} if
it has totally bounded curvature and $\rinj(M)>0$ (see \ref{not.standard}).
\end{definition}

We shall need the following remark.

\begin{remark}\label{rem.same.do}
  Our definition of differential operators coincides with the one in \cite{kordyukovLp1, kordyukovLp2,
  ShubinAsterisque} for manifolds with bounded geometry since in any
  normal coordinate chart defined on a ball of radius $r < \rinj(M)$, the coefficient
  of our differential operator can be expressed in terms of the Christoffel coefficients
  and of the bundle morphisms $a_j$ of Equation~\ref{eq.def.do}
  and hence they and their derivatives are uniformly bounded. This will be discussed
  in greater detail in our forthcoming paper \cite{KohrGrosseNistor2}.
\end{remark}

Of course, a smooth, compact manifold (without boundary) will have bounded
geometry. Another useful class of manifolds with bounded geometry is the class of manifolds
with cylindrical ends, see Figure \ref{fig1} for an example of such a
manifold.

\begin{figure}
\centering 
  \includegraphics[width=0.5\textwidth]{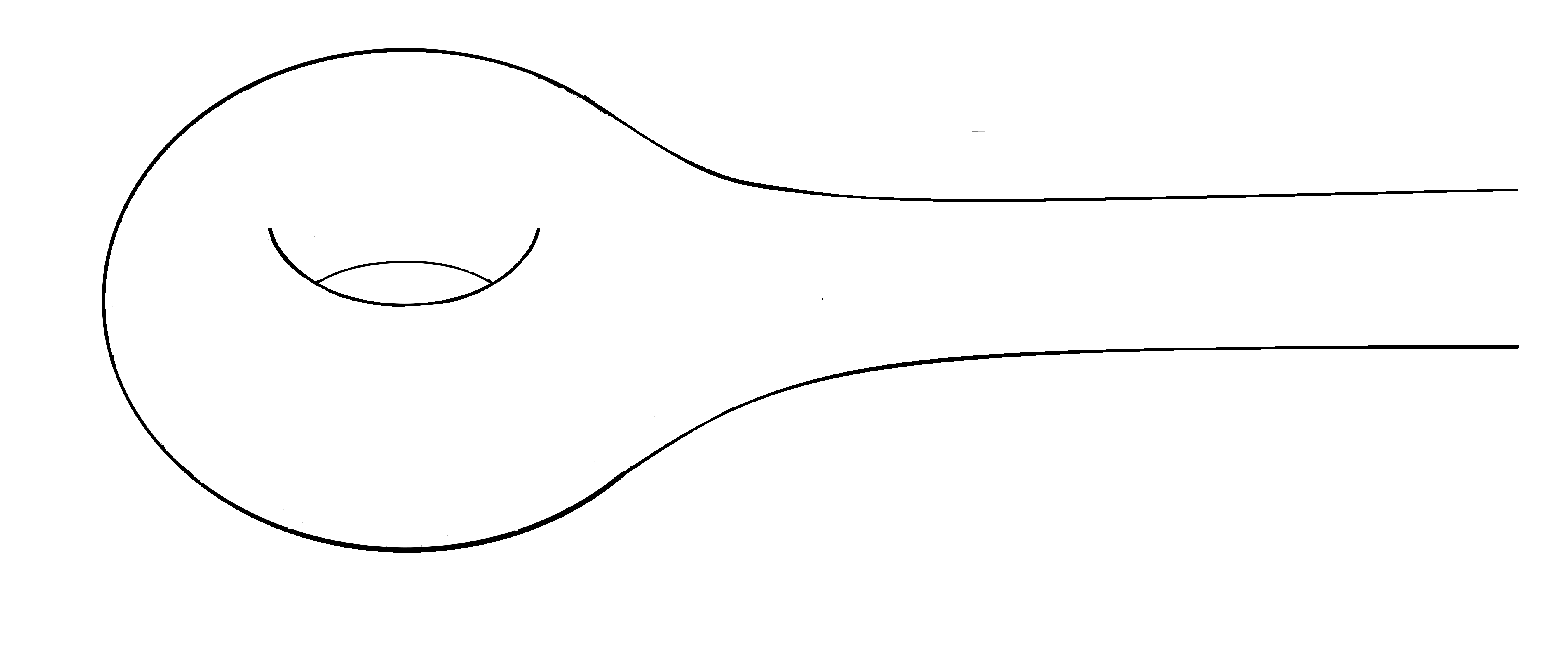}
  \caption{A manifold with cylindrical ends}\label{fig1}
\end{figure}

Some of the results that hold for these manifolds (compact or with
cylindrical ends) hold also for manifolds with bounded geometry, most notably
the ones related regularity, which are exactly the results that we will need next.
Before discussing regularity, we need to introduce, however, an alternative
definition of Sobolev spaces on manifolds with bounded geometry.

\subsection{Sobolev spaces via coverings and partitions of unity}
\label{ssec.coverings}

\emph{From now on, we will assume that $M$ is a manifold with bounded geometry and that the
vector bundles $E, F\to M$ have totally bounded curvature.} With these assumptions, we now
recall an alternative definition of the Sobolev spaces on $M$ with coefficients in $E$,
see \cite{GrosseSchneider, TriebelBG}. See also \cite{sobolev}. 

Let $\dimM$ denote the dimension of $M$.
We will write $B_r^{\dimM}(x)$ instead of $B_r^{\RR^{\dimM}}(x)$, for simplicity.
Identifying the space $T_xM$ with $\RR^{\dimM}$ via some choice of an orthonormal basis,
we obtain, for $r < \rinj(M)$, the \emph{geodesic normal coordinates} diffeomorphisms
\begin{equation}\label{eq.def.kx}
  \kappa_x \ede \exp^{M}_x \colon B^{\dimM}_{r}(0) \to B^{M}_r(x) \subset M\,.
\end{equation}

\begin{definition}\label{FC}
Let $M$ be a manifold with bounded geometry and let $0 < r\leq \frac{1}{2} \rinj (M)$.
An \emph{$r$-covering subset of $M$} is a subset $\{x_\gamma\}_{\gamma \in \NN} \subset M$
that satisfies the following conditions:
\begin{enumerate}[(i)]
  \item $M \subset \cup_{\gamma \in \NN} B_r^M(x_\gamma)$.
  \item For any $R>0$, there exists $N_R \in \NN$ with the property that, for any
  $y \in M$, the set $\{\gamma \in \NN \mid \dist_g(x_\gamma, y) <
  R\}$ contains at most $N_R$ elements.
\end{enumerate}
\end{definition}

Note that it is always possible to construct an $r$-covering set
$\{x_\gamma\}$ and the corresponding covering $\{B_r^M(x_\gamma) \}$
of $M$ is strongly locally finite (see, for instance,
\cite{sobolev, GrosseSchneider} and the references therein).
For further purposes, we need the following class of partitions of unity defined using
$r$-covering sets.

\begin{definition} \label{def_part}
A partition of unity $\{\phi_\gamma\}_{\gamma \in \NN}$ of $M$ is
called \emph{an $r$-uniform partition of unity associated to the
$r$-covering set $\{x_\gamma\} \subset M$} (Definition~\ref{FC})
if, for each $\rho  \in \ZZ_{+}$, we have $\sup_{\gamma}
\|\phi_\gamma\|_{W^{\rho ,\infty}(M)} < \infty$ and,
for each $\gamma \in \NN$, the support of $\phi_\gamma$ is contained
in $B_r^M(x_\gamma)$.
\end{definition}

In order to deal with the coefficient vector bundles $E, F \to M$,
we will also need to locally trivialize these bundles.

\begin{definition}\label{def_sync}
  Let $M$ be a Riemannian manifold with bounded geometry, $x \in M$, $r < \rinj(M)$,
  and $E \to M$ be a Hermitian vector bundle with metric preserving
  connection. Then the \emph{synchronous trivialization} of $E$ above
  $B_x^M(r)$ is the vector bundle isomorphism
  $ \xi_x \colon E|_{B_r^M(x)} \to B_r^M(x) \times E_{x}$
  induced by parallel transport along the geodesic rays emanating from $x$.
  By abuse of notation, we shall also denote by
  \begin{equation*}
    \xi_x \colon E|_{B_r^M(x)} \to B_r^{\dimM}(0) \times \CC^\tau
  \end{equation*}
  the resulting isomorphism obtained using also the geodesic coordinates diffeomorphism
  $\kappa_x \colon B_r^{\dimM}(0)\to B_r^M(x)$ (of Equation~\eqref{eq.def.kx}) and some choice
  of an isomorphism $E_x \simeq \CC^\tau$.
\end{definition}

We shall need a definition of Sobolev spaces using partitions of unity
\cite{GrosseSchneider} and a few standard results. The
following proposition is a direct consequence of Theorems 14 and
26 in \cite{GrosseSchneider}.

\begin{proposition}\label{prop.part.unity}
Let $M$ be a Riemannian manifold with bounded geometry. Let $E \to M$ be a vector
bundle over $M$ with totally bounded curvature. Let $\{\phi_\gamma\}$ be an
$r$-uniform partition of unity associated
to an $r$-covering set $\{x_\gamma\} \subset M$ (Definition~\ref{def_part})
and let $\kappa_\gamma \define \kappa_{x_\gamma} \colon B_r^{\dimM}(0)\to B_r^M(x_\gamma)$ and
$\xi_\gamma \define \xi_{x_\gamma} \colon E|_{B_r^M(x)} \to B_r^{\dimM}(0) \times \CC^\tau$
be as in Definition~\ref{def_sync}. Let $1 < p < \infty$ and $s \in \RR$. Then
  \begin{equation*}
  ||| u |||_{s,p}^{p} \define \sum_{\gamma}
  \|\xi_\gamma(\phi_\gamma u)\|_{W^{s,p}(B_r^{\dimM}(0); \CC^\tau)}^{p}
 \end{equation*}
defines a norm equivalent to the norm on $W^{s,p}(M;E)$ introduced in
Definition~\ref{def.Sobolev} using connections.
\end{proposition}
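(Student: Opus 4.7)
The plan is to prove the equivalence first for non-negative integer orders $s = k \in \ZZ_+$ by a direct comparison in each geodesic chart, then to deduce the general case by duality (for negative integers) and complex interpolation (for non-integer $s$). The key enabling observation is that, under bounded geometry of $(M,g)$ together with totally bounded curvature of $E$, the Christoffel symbols of $g$ pulled back via $\kappa_\gamma$ as well as the connection one-form of $\nabla^E$ expressed in the synchronous trivialization $\xi_\gamma$ are uniformly bounded in $W^{\rho,\infty}(B_r^\dimM(0))$ for every $\rho \in \ZZ_+$, with constants independent of $\gamma \in \NN$. This is classical and reflects the fact that in geodesic normal coordinates these quantities admit Taylor expansions whose coefficients are curvatures and their covariant derivatives at $x_\gamma$. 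Consequently, $(\xi_\gamma)_* \nabla^E$ differs from the flat Euclidean derivative $d$ on $B_r^\dimM(0)$ by a zeroth order term whose $W^{k,\infty}$ norm is controlled uniformly in $\gamma$.

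For the forward estimate $||| u |||_{k,p}^p \leq C \|u\|_{W^{k,p}(M;E)}^p$, I would Leibniz-expand $\nabla^j(\phi_\gamma u)$ for $j \leq k$, use the uniform bound $\sup_\gamma \|\phi_\gamma\|_{W^{k,\infty}(M)} < \infty$ from Definition~\ref{def_part}, and then transfer to the chart using the uniform comparison above to obtain
\begin{equation*}
  \|\xi_\gamma(\phi_\gamma u)\|_{W^{k,p}(B_r^\dimM(0);\CC^\tau)}^p \leq C \sum_{i=0}^{k} \int_{B_r^M(x_\gamma)} \|\nabla^i u\|^p \dvol,
\end{equation*}
with $C$ independent of $\gamma$. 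Summing over $\gamma$ and invoking the strong local finiteness of $\{B_r^M(x_\gamma)\}$ (point (ii) of Definition~\ref{FC}) closes this direction. For the reverse inequality, I would exploit the reconstruction $u = \sum_\gamma \phi_\gamma u$: the finite overlap of the covering yields, via the elementary bound $|\sum_{j=1}^N a_j|^p \leq N^{p-1}\sum_{j=1}^N |a_j|^p$, a pointwise estimate of $\|\nabla^j u\|^p$ by a locally finite sum of $\|\nabla^j(\phi_\gamma u)\|^p$; after transferring each summand to the Euclidean chart (with Jacobians uniformly bounded above and below, again by bounded geometry) this integrates to $\|u\|_{W^{k,p}(M;E)}^p \leq C ||| u |||_{k,p}^p$.

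The main obstacle is the extension to arbitrary real $s$. For a negative integer $s = -k$, I would invoke the identification $W^{-k,p'}(M;E) = W^{k,p}(M;E)^*$ from Subsection~\ref{ssec.covariant}, pair a functional $u \in W^{-k,p'}(M;E)$ against a partition-decomposed test section $\sum_\gamma \phi_\gamma v$, and apply the already established positive-integer equivalence; the finite overlap then identifies the resulting dual norm with a weighted $\ell^{p'}$-sum of local dual norms on $B_r^\dimM(0)$. For non-integer $s$, complex interpolation between consecutive integers delivers the equivalence, since the partition-of-unity norm $|||\cdot|||_{s,p}$ is by construction an $\ell^p$-valued norm taking values in a \emph{fixed} Euclidean Sobolev space, for which the interpolation scale is standard. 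The delicate point is preserving the uniform control of constants in $\gamma$ through both duality and interpolation; this again reduces to bounded geometry. A more self-contained alternative, which is the route taken by the authors, is to invoke directly Theorems~14 and~26 of \cite{GrosseSchneider}, which carry out exactly this reasoning in a somewhat broader framework.
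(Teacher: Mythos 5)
Your proposal is correct in outline and is essentially a reconstruction, at sketch level, of how the cited result is proved; the paper itself gives no proof of this proposition beyond invoking Theorems~14 and~26 of \cite{GrosseSchneider}, so your route is genuinely different from what appears on the page. Your integer-order argument is the standard one and is sound: uniform $W^{\rho,\infty}$ control of the Christoffel symbols and of the connection form in the synchronous gauge (a consequence of totally bounded curvature in bounded geometry), the Leibniz rule together with the uniform bounds $\sup_\gamma\|\phi_\gamma\|_{W^{\rho,\infty}}<\infty$ from Definition~\ref{def_part}, the transfer via $\kappa_\gamma, \xi_\gamma$ with uniformly comparable Jacobians, and the finite overlap from Definition~\ref{FC}(ii) to sum both directions. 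The weak points, which you flag yourself, are the passage to negative integers by duality and to non-integer $s$ by interpolation: this is where the real work lies, because one must retain $\gamma$-uniformity of constants, and one must first fix what $W^{s,p}(M;E)$ even means for non-integer $s$ when $p\ne2$ (the paper defines only $H^s$ by interpolation; for general $p$ one is implicitly using the scale constructed in \cite{GrosseSchneider} itself), so the interpolation step is more delicate than ``standard'' suggests. One small misstatement: you call citing \cite{GrosseSchneider} the ``more self-contained alternative'' — it is the opposite; the citation is the shorter but less self-contained route, and your sketch is the self-contained one. Overall: approach correct and complementary to the paper, which buys the reader an idea of why the norms are equivalent, at the cost of glossing over exactly the fractional-order machinery that \cite{GrosseSchneider} supplies.
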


Let $\| \cdot \|_{\ell^p}$ be the norm on $\ell^p(\NN)$.
That is, if $a = (a_n)_{n \in \NN}$ is a sequence of complex numbers, then
\begin{equation*}
  \| a \|_{\ell^p} \seq \| (a_n)_{n \in \NN} \|_{\ell^p} \ede
  \begin{cases}
    \ \big ( \sum_{n \in \NN} |a_n|^p \big) ^{1/p} & \mbox{ if } p < \infty\\
    \ \sup_{n \in \NN} |a_n| & \mbox{ if } p = \infty\,.
  \end{cases}
\end{equation*}
Then $\|a\|_{p}$ is \emph{decreasing} in $p \in [1, \infty]$. 
It will be convenient to also write
\begin{equation}\label{eq.two-norms}
  ||| u |||_{s,p} \seq 
  \big \| \big ( \|\xi_\gamma(\phi_\gamma u)\|_{W^{s,p}} \big)_{\gamma \in \NN}
  \big \|_{\ell^p}\,.
 \end{equation}
with $\|\xi_\gamma(\phi_\gamma u)\|_{W^{s,p}} = 
\|\xi_\gamma(\phi_\gamma u)\|_{W^{s,p}(B_r^{\dimM}(0); \CC^\tau)}$.

The Rellich-Kondrachov compact embedding theorem
\cite[Theorem~6.3]{AdamsBook} (see also \cite{AdamsFournier, AmannSobolev, Kondrachov-comp, Taylor3})
gives then the following extension to manifolds with bounded geometry of the
results of Rellich-Kondrachov and Lions. Related results were obtained in \cite{Skrz}
(but using weighted spaces) and in \cite[Appendix A]{Grosse12} for $p = 2$.

\begin{proposition}\label{prop.l.reg}
  Let $p, p', q \in (1, \infty)$, $s, s'\in \ZZ_+$, $t \in \ZZ$, 
  satisfy the Rellich-Kondrachov conditions:
  \begin{equation*}
    t < s < s' \quad \mbox{ and }\quad {\max\{t, 0\}} - n/q \le s - n/p \le s' - n/p' \,.
  \end{equation*}

  Let also $M$ be a manifold with bounded geometry and $E \to M$ be a hermitian vector
  bundle with totally bounded curvature.
  \begin{enumerate}[(i)]
    \item If also $p' \le p$, then we have $W^{s', p'}(M; E) \subset W^{s, p}(M; E)$ 
    (continuous embedding).

    \item Assume furthermore that $\max\{ p', q \} \le p$ and $s - n/p < s' - n/p'$. Then,
    for every $\epsilon > 0$, there exists $C_\epsilon > 0$ such that,
    for all $u \in W^{s', p'}(M; E)$, we have
    $$ \|u\|_{W^{s, p}(M; E)} \le \epsilon \|u\|_{W^{s', p'}(M; E)}
    + C_\epsilon \|u\|_{W^{t, q}(M; E)}\,.$$
  \end{enumerate}
\end{proposition}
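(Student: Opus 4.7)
The plan is to reduce both statements to the classical Rellich--Kondrachov theorem on the standard Euclidean ball $B_r^n(0)$, by using the partition-of-unity characterization of Sobolev norms provided by Proposition~\ref{prop.part.unity} and formula~\eqref{eq.two-norms}. Throughout, I will use a fixed $r$-uniform partition of unity $\{\phi_\gamma\}$ subordinate to an $r$-covering set $\{x_\gamma\}$ with $r < \rinj(M)/2$, together with the synchronous trivializations $\xi_\gamma$, so that every norm $\|\xi_\gamma(\phi_\gamma u)\|_{W^{s,p}}$ lives on the \emph{same} standard ball $B_r^n(0) \subset \RR^n$ in functions with values in $\CC^\tau$. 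This uniformity is precisely what the bounded-geometry assumption (via Proposition~\ref{prop.part.unity}) buys us, and it will guarantee that the constants appearing in the classical Rellich--Kondrachov embeddings applied on each ball are independent of $\gamma$.

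For part (i), the hypotheses $s' > s$ and $s' - n/p' \ge s - n/p$ give the continuous embedding $W^{s',p'}(B_r^n(0);\CC^\tau) \hookrightarrow W^{s,p}(B_r^n(0);\CC^\tau)$ with some fixed constant $C$. Applying this to $v_\gamma \define \xi_\gamma(\phi_\gamma u)$ yields
\begin{equation*}
  \|v_\gamma\|_{W^{s,p}} \,\le\, C\, \|v_\gamma\|_{W^{s',p'}}
  \quad \text{for every } \gamma \in \NN.
\end{equation*}
Passing to $\ell^p$-norms of the resulting sequences and then using the elementary monotonicity $\|\cdot\|_{\ell^p}\le \|\cdot\|_{\ell^{p'}}$ for $p' \le p$ on sequences indexed by $\NN$, and finally invoking Proposition~\ref{prop.part.unity} on both sides, gives $|||u|||_{s,p} \le C \, |||u|||_{s',p'}$, which is the desired continuous embedding.

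For part (ii), the \emph{strict} inequality $s - n/p < s' - n/p'$ upgrades the local embedding to a \emph{compact} one, $W^{s',p'}(B_r^n(0);\CC^\tau) \hookrightarrow\hookrightarrow W^{s,p}(B_r^n(0);\CC^\tau)$, while the condition $\max\{t,0\}-n/q \le s-n/p$ (together with $t < s$) gives the continuous embedding $W^{s,p}(B_r^n(0);\CC^\tau) \hookrightarrow W^{t,q}(B_r^n(0);\CC^\tau)$. The classical Ehrling/Lions lemma then provides, for each $\epsilon > 0$, a constant $C_\epsilon > 0$, independent of $\gamma$, such that
\begin{equation*}
  \|v_\gamma\|_{W^{s,p}} \,\le\, \epsilon\, \|v_\gamma\|_{W^{s',p'}} \,+\, C_\epsilon\, \|v_\gamma\|_{W^{t,q}}.
\end{equation*}
Taking the $\ell^p$-norm over $\gamma$, applying Minkowski, and using $\|\cdot\|_{\ell^p} \le \|\cdot\|_{\ell^{p'}}$ and $\|\cdot\|_{\ell^p} \le \|\cdot\|_{\ell^q}$ (which are valid because $\max\{p',q\} \le p$), I arrive at
\begin{equation*}
  |||u|||_{s,p} \,\le\, \epsilon \, |||u|||_{s',p'} \,+\, C_\epsilon \, |||u|||_{t,q},
\end{equation*}
and Proposition~\ref{prop.part.unity} translates this back to the norms on $W^{\bullet,\bullet}(M;E)$.

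The only genuinely delicate point is the \emph{uniformity} of the local constants across the covering $\{B_r^M(x_\gamma)\}$. However, after pulling back via $\kappa_\gamma$ and $\xi_\gamma$, every local problem becomes a problem on the fixed model ball $B_r^n(0) \subset \RR^n$ with values in $\CC^\tau$, so the classical Rellich--Kondrachov embedding (and the derived Ehrling inequality) is applied once and for all; the role of the bounded geometry is simply to make this reduction legitimate and to ensure that the equivalence of norms in Proposition~\ref{prop.part.unity} holds with a single constant.
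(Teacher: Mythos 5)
Your proposal is correct and follows essentially the same strategy as the paper: reduce to the fixed Euclidean model ball via Proposition~\ref{prop.part.unity}, apply the classical embedding (part (i)) or the Ehrling-type inequality (part (ii)) there with constants that are automatically uniform in $\gamma$, and then globalize using Minkowski's inequality and the monotonicity $\|\cdot\|_{\ell^p}\le\|\cdot\|_{\ell^{p'}}$ for $p'\le p$. The only difference is that you invoke the Ehrling/Lions lemma as a black box, whereas the paper spells out its proof on the ball via the compactness-plus-contradiction argument; this is a presentational rather than mathematical difference.
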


\begin{proof}
Both statements follows from the analogous statements over an open ball
$M_0 \subset \RR^\dimM$ together with Proposition~\ref{prop.part.unity}.

Let us see how this is done. We first review the case when $M$ is 
replaced with an open ball $M_0$ in a euclidean spaces. We may assume then that 
$E$ is replaced with a trivial bundle $E_0 \to M_0$. The first
statement for $(M_0, E_0)$ is then nothing but the classical Sobolev embedding,  
\cite[Theorem~5.4]{AdamsBook}. 

Let us recall next a proof of the second statement for
$(M_0, E_0)$ (recall that $M_0$ is an open ball). 
Let us assume that the statement is not true. Let
us fix $\epsilon > 0$. Then, there exists a sequence $u_n \in W^{s', p'}(M_0, E_0)$
such that
$$ \|u_n\|_{W^{s, p}(M_0, E_0)} > \epsilon \|u_n\|_{W^{s', p'}(M_0, E_0)} + n
\|u_n\|_{W^{t, q}(M_0, E_0)}\,.$$
We can assume that $\|u_n\|_{W^{s', p'}(M_0, E_0)} = 1$. In view of the Rellich-Kondrachov
compactness theorem, the natural inclusions
$W^{s', p'}(M_0, E_0) \to W^{s, p}(M_0, E_0)$
is compact, and hence (eventually after replacing the sequence $(u_n)_{n \in \NN}$
with a subsequence), we may also assume that the sequence
$u_n$ converges in norm to $w \in W^{s, p}(M_0, E_0)$. Passing to the limit in the
relation \[ \|u_n\|_{W^{s, p}(M_0, E_0)} > \epsilon \|u_n\|_{W^{s', p'}(M_0, E_0)} = \epsilon,\]
we obtain $ \|w\|_{W^{s, p}(M_0, E_0)} \ge \epsilon > 0$. Hence $w \neq 0$.
Let us notice that the assumptions also give that $W^{s, p}(M_0, E_0) \to W^{t, q}(M_0, E_0)$
is continuous. Indeed, if $t \ge 0$, this follows directly from the Rellich-Kondrachov embedding.
If $t < 0$, then it follows from the continuity of $W^{s, p}(M_0, E_0) \to W^{0, q}(M_0, E_0)$
and of $W^{0, q}(M_0, E_0) \to W^{t, q}(M_0, E_0)$.
The continuity of $W^{s, p}(M_0, E_0) \to W^{t, q}(M_0, E_0)$ gives that $u_n$ converges to
$w$ also in $W^{t, q}(M_0, E_0)$. Let $R \define \sup_{n \in \NN} \| u_n \|_{W^{s, p}(M_0, E_0)}$. We have
$R < \infty$ since $u_n$ converges in $W^{s, p}(M_0, E_0)$. Then, passing to the limit in the relation
$$R/n \ge \|u_n\|_{W^{s, p}(M_0, E_0)}/n >  \|u_n\|_{W^{t, q}(M_0, E_0)}\,,$$ we
obtain $\|w\|_{W^{t, q}(M_0, E_0)} = 0$. This is in contradiction with the previously
proved relation $w\neq 0$. Thus the
statement (ii) is true if $M = M_0$, an open ball in $\RR^\dimM$.

Let us next prove (i) in general. Let $u \in W^{s',p'}(M; E)$. We shall write $A \lesssim B$
if there is a bound $C \ge 0$ independent of the functions involved and $\epsilon >0$ 
such that $A \le CB$. We shall use below also Equation~\eqref{eq.two-norms}. 
Then $\|\xi_\gamma(\phi_\gamma u)\|_{W^{s,p}} \lesssim \|\xi_\gamma(\phi_\gamma u)\|_{W^{s',p'}}$
by the case of a ball in $\RR^\dimM$ (the Rellich-Kondrachov inequality), and hence 
\begin{align*}
  \|u\|_{W^{s, p}(M; E)}\ & \lesssim ||| u |||_{s,p}  \ede 
  \big \| \big ( \|\xi_\gamma(\phi_\gamma u)\|_{W^{s,p}} \big)_{\gamma \in \NN}
  \big \|_{\ell^p}\\ 
  & \lesssim \big \| \big ( \|\xi_\gamma(\phi_\gamma u)\|_{W^{s',p'}} \big)_{\gamma \in \NN}
  \big \|_{\ell^p}\\ 
  & \leq \big \| \big ( \|\xi_\gamma(\phi_\gamma u)\|_{W^{s',p'}} \big)_{\gamma \in \NN}
  \big \|_{\ell^{p'}}  \ =:\ ||| u |||_{s',p'}
   \, \lesssim\, \|u\|_{W^{s', p'}(M; E)}\,,
\end{align*}
where we have used also the fact that $\|\cdot \|_{\ell^p}$ is decreasing in $p$. This proves (i).

Let us now prove (ii) in the general case. We have successively:
\begin{align*}
  \|u\|_{W^{s, p}(M; E)}  & \lesssim \big \| \big ( \|\xi_\gamma(\phi_\gamma u)\|_{W^{s,p}} \big)_{\gamma \in \NN}
  \big \|_{\ell^p}\\ 
  & \leq \big \| \big ( \epsilon \|\xi_\gamma(\phi_\gamma u)\|_{W^{s',p'}} 
  + C_\epsilon \|\xi_\gamma(\phi_\gamma u)\|_{W^{t,q}} \big)_{\gamma \in \NN}
  \big \|_{\ell^p}\\ 
  & \leq \epsilon  \big \| \big ( \|\xi_\gamma(\phi_\gamma u)\|_{W^{s',p'}} \big)_{\gamma \in \NN}
  \big \|_{\ell^p}
  + C_\epsilon \big \| \big ( \|\xi_\gamma(\phi_\gamma u)\|_{W^{t,q}} \big )_{\gamma \in \NN}
  \big \|_{\ell^p}\\ 
  & \leq \epsilon  \big \| \big ( \|\xi_\gamma(\phi_\gamma u)\|_{W^{s',p'}} \big)_{\gamma \in \NN}
  \big \|_{\ell^{p'}}
  + C_\epsilon \big \| \big ( \|\xi_\gamma(\phi_\gamma u)\|_{W^{t,q}} \big )_{\gamma \in \NN}
  \big \|_{\ell^q} \\
  &  \lesssim \epsilon \|u\|_{W^{s', p'}(M; E)} + C_\epsilon \|u\|_{W^{t, q}(M; E)} \,,
\end{align*}
where we have used again the decreasing monotony of the $\ell^p$-norm and $p\ge p'$ and $p \ge q$.
This means $\|u\|_{W^{s, p}(M; E)} \le C \epsilon \|u\|_{W^{s', p'}(M; E)} + C C_\epsilon \|u\|_{W^{t, q}(M; E)}$.
To obtain the desired result, one needs only to rescale $\epsilon$ by replacing it with $\epsilon/C$.
\end{proof}

Of course, if $p = p' = q = 2$, then one can prove the above result using also
interpolation inequalities. See also \cite{AmannFunctSp, sobolev, Aubin, HebeyBook, HebeyRobert,
TriebelBG} for related results, in particular, for the use of the partitions of
unity. See \cite{BrezisBook, LionsMagenes1, Taylor1, TriebelBook, TriebelEll} for the general
results on Sobolev spaces not proved above.

\section{Regularity estimates for second order strongly elliptic operators}
\label{sec4}

We shall need the regularity result for uniformly strongly elliptic operators on
manifolds bounded geometry $M$ stated in the Introduction (and which is a slight 
generalization of a result in \cite{GN17}). Note, however, that in \cite{GN17}, the 
authors have used operators in \emph{divergence form} (unlike in this paper), so
the regularity requirements on the coefficients in that and this paper are
(slightly) different.

\subsection{Regularity conditions}
Recall the notation $\maD^{\mu, \rho}(M;E, F)$ for our order $\mu$ differential 
operators with coefficients in $W^{\rho,\infty}$ (Definition \ref{def.Dmr}).

\begin{definition} \label{def.strongly.elliptic}
  Let $P \in \maD^{2, 0}(M; E)$. We shall say that $P$ is
  \emph{uniformly strongly elliptic} if there exists $\gamma_P > 0$ such that, for all
  $x \in M$ and all $\eta \in T_x^* M$ and $\zeta \in E_x$, we
  have that its principal symbol $\sigma_2(P) \colon T^*M \to \End(E)$ satisfies
  the inequality
  \begin{equation}\label{eq.def.se}
    \operatorname{Re} (\sigma_2(P)(\eta) \zeta, \zeta)_{E_x} 
    \ge \gamma_P \|\eta\|^2 \|\zeta\|^2\,.
  \end{equation}
\end{definition}

Of course, the defining relation \eqref{eq.def.se} is equivalent to 
\begin{equation}
  \sigma_2(P)(\eta) + \sigma_2(P)(\eta)^* \ge 2\gamma_P \|\eta\|^2
\end{equation}
(with the inequality in operator sense).
Let us now prove our regularity result stated in the Introduction.

\begin{proof}[Proof of Theorem~\ref{thm.reg.D}]
  A differential operator with coefficients in $W^{\rho + 3, \infty}$ can
  be written as a differential operator in divergence form with coefficients
  in $W^{\rho + 2, \infty}$. (Indeed, in local coordinates, an operator 
  in divergence form is a linear combination of operators of the form $\pa_i a \pa_j$.
  The claim is obtained by writing $\pa_i a \pa_j = a \pa_i \pa_j + \pa_i(a) \pa_j$,
  with the right hand side consisting of operators written in the standard form.
  The loss of one order of regularity is due to the appearance of $\pa_i(a)$.)
  For $t = 1$ and operators in divergence form with
  coefficients in $W^{\rho + 2, \infty}$, this result was proved in
  \cite[Theorem 5.12]{GN17}. This completes our proof if $t = 1$.
  Proposition~\ref{prop.l.reg} for  $\epsilon c_{s,1} < 1$, $p = p' = q = 2$,
  and $s' = s +2$ then allows us to extend to the case $t < 1$.
  The result for $t > 1$ follows from the result for $t = 1$. The proof is
  thus complete.
\end{proof}

A more general result than this theorem will be provided in \cite{GKN-23}.
We immediately obtain the following result (which eliminates the restriction
$s \le \rho$ of the above theorem).

\begin{corollary}\label{cor.regularity}
  We use the notation and assumptions of Theorem~\ref{thm.reg.D}, except
  that we assume that $P$ has coefficients in $W^{\infty, \infty}$, but allow $s \in \NN$.
  Then there exists $c_{s,t} > 0$ such that, if $u \in H^{t}(M; E)$ and $Pu \in H^{s}(M; F)$, 
  it follows that $u \in H^{s + 2}(M; E)$ and
  \begin{equation*}
    \| u \|_{H^{s + 2}(M; E)} \leq c_{s,t}
    \left( \|P u\|_{H^{s}(M; F)} + \| u \|_{H^{t}(M; E)} \right).
  \end{equation*}
\end{corollary}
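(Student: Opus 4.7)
The plan is to observe that the restriction $s\le\rho$ in Theorem~\ref{thm.reg.D} is only there to ensure that the coefficients of $P$ have enough regularity (namely $W^{\rho+3,\infty}\supseteq W^{s+3,\infty}$) to bound $s$-derivatives of $Pu$ in terms of $s+2$ derivatives of $u$. Once we assume coefficients in $W^{\infty,\infty}:=\cap_{k\in\NN}W^{k,\infty}$, this restriction becomes vacuous: for each prescribed pair $(s,t)$ with $s\in\NN$ and $t\in\RR$, we simply set $\rho:=s$ and note that the coefficients of $P$ lie in $W^{s+3,\infty}=W^{\rho+3,\infty}$ automatically, so all hypotheses of Theorem~\ref{thm.reg.D} are met.

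Concretely, I would write: fix $s\in\NN$ and $t\in\RR$ as in the statement. Since $P$ has coefficients in $W^{\infty,\infty}\subset W^{s+3,\infty}$, Theorem~\ref{thm.reg.D} applied with $\rho:=s$ (so the inequality $s\le\rho$ is satisfied as equality) yields a constant $c_{s,t}>0$ such that, for every $u\in H^{t}(M;E)$ with $Pu\in H^{s}(M;F)$, one has $u\in H^{s+2}(M;E)$ together with the estimate
\begin{equation*}
\|u\|_{H^{s+2}(M;E)}\,\le\, c_{s,t}\bigl(\|Pu\|_{H^{s}(M;F)}+\|u\|_{H^{t}(M;E)}\bigr).
\end{equation*}
This is exactly the desired conclusion, so the proof is complete.

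There is essentially no obstacle: the only subtlety worth flagging is that the constant $c_{s,t}$ now implicitly depends on $\|a_j\|_{W^{s+3,\infty}}$, which is finite for each fixed $s$ because the coefficients lie in $W^{\infty,\infty}$; this is why the argument does not produce a single estimate uniform in $s$, but only one valid for each $s$ separately. No new ingredient beyond Theorem~\ref{thm.reg.D} is needed.
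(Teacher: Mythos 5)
Your proof is correct and matches the paper's (implicit) reasoning: the paper derives this corollary directly from Theorem~\ref{thm.reg.D} precisely because the constraint $s\le\rho$ becomes vacuous once the coefficients lie in $W^{\infty,\infty}$, which is exactly your argument of taking $\rho:=s$. The remark about the constant $c_{s,t}$ depending on $s$ through $\|a_j\|_{W^{s+3,\infty}}$ is a sensible observation and consistent with the statement, which only asserts existence of $c_{s,t}$ for each fixed pair $(s,t)$.
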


The above result will be used (mostly) for $s = t = 0$, which explains why we needed
to include the case $t=0$ in our regularity result, Theorem \ref{thm.reg.D}.
As already alluded to above, these regularity results are closely related
to the results in \cite{GN17}, but also to the ones in \cite{kordyukovLp1, kordyukovLp2, 
SkrzMapping}.

\begin{remark}
As noticed already in Remark~\ref{rem.same.do}, a differential operator in our
sense (i.e. a $\nabla$-differential operator) is also a uniform differential
operator in the sense of \cite{kordyukovLp1, kordyukovLp2}. Thus, if $P$ is
\emph{scalar}, then Corollary~\ref{cor.regularity} was stated also in
\cite{kordyukovLp1, kordyukovLp2}, where it was also noticed
that it can be proved as in the classical case of compact manifolds. 
See also Proposition 4 in \cite{Skrz}. 
Clearly, these classical proofs are also valid for uniformly strongly elliptic 
\emph{systems of operators.} On a different note, the restriction $t \ge 1$ in \cite{GN17} 
was due to the fact that, in that paper, the authors worked on
manifolds with boundary. In principle, the method of that paper could also 
give the result for $t \in \ZZ$, when there is no boundary.
\end{remark}

\subsection{Consequences of regularity}
We continue to assume that $M$ is a manifold with bounded geometry and that
$E,F \to M$ are two Hermitian vector bundles with totally bounded curvature.
We shall need the following standard consequences of our regularity results.
For simplicity, we assume that all our differential operators $P$ have
smooth coefficients. Let then
\begin{equation}\label{eq.def.d.inf}
  \maD^{\mu,\infty}(M; E, F) \ede \bigcap _{\rho  \in \NN} \maD^{\mu, \rho}(M; E, F).
\end{equation}
(Compare with Definition \ref{def.Dmr}, where the spaces $\maD^{\mu, \rho}(M; E, F)$
of differential operators were introduced.)
Corollary~\ref{cor.regularity} yields the following results.

\begin{corollary} \label{cor.closed}
  Let $P \in \maD^{2,\infty}(M; E, F)$ be a uniformly strongly elliptic
  differential operator  (see Equation \ref{eq.def.d.inf} for notation). We regard $P$ as an \emph{unbounded} operator
  on $H^s(M; E)$ with domain  $H^{s+2}(M; E) \subset H^s(M; E)$ and with values
  in $H^s(M; F)$. Then $P$ is closed.
\end{corollary}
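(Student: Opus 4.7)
The plan is to verify closedness by checking the graph condition and then applying the regularity estimate from Corollary~\ref{cor.regularity}. Let $(u_n)_{n \in \NN}$ be a sequence in the domain $H^{s+2}(M; E)$ such that $u_n \to u$ in $H^s(M; E)$ and $P u_n \to v$ in $H^s(M; F)$. We must show that $u \in H^{s+2}(M; E)$ and that $Pu = v$.

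First, I would identify $Pu$ with $v$ in a weak sense. Since $P \in \maD^{2,\infty}(M; E, F)$ has coefficients in $W^{k,\infty}$ for every $k$, it defines a continuous map $P \colon H^{s}(M; E) \to H^{s-2}(M; F)$ (using the Sobolev-space mapping properties recalled in Section~\ref{sec:gd} together with Corollary~\ref{cor.regularity}'s ambient framework; alternatively, one may simply pair against test sections in $\CIc(M; F)$). Thus $u_n \to u$ in $H^s(M; E)$ gives $P u_n \to P u$ in $H^{s-2}(M; F)$, while by hypothesis $P u_n \to v$ in $H^{s}(M; F) \hookrightarrow H^{s-2}(M; F)$. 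Uniqueness of limits in $H^{s-2}(M; F)$ yields $P u = v$.

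Next, I would upgrade the regularity of $u$. We have $u \in H^{s}(M; E)$ and $P u = v \in H^{s}(M; F)$. Applying Corollary~\ref{cor.regularity} with the choice $t = s$, we conclude that $u \in H^{s+2}(M; E)$, which places $u$ in the domain of $P$, and moreover
\begin{equation*}
  \|u\|_{H^{s+2}(M; E)} \leq c_{s,s}\bigl(\|Pu\|_{H^{s}(M; F)} + \|u\|_{H^{s}(M; E)}\bigr),
\end{equation*}
so the graph norm is equivalent to the $H^{s+2}$-norm on the domain (a convenient, though not strictly needed, corollary).

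The only genuinely non-routine point is the elliptic regularity step, and that is already done for us by Corollary~\ref{cor.regularity}; everything else is soft functional analysis. I would anticipate the only pitfall being the consistency of the action of $P$: one must ensure that the $P$ applied to the limit $u \in H^s$ (a priori an object of lower regularity than the domain sections $u_n$) agrees with the limit $v$ of $P u_n$. This is handled cleanly by the continuity of $P$ between Sobolev spaces of negative order, as indicated above.
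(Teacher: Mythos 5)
Your proof is correct and follows essentially the same route as the paper: take a sequence in the graph, identify the weak limit of $Pu_n$ with $Pu$ (the paper phrases this as continuity of $P$ in the sense of distributions, you phrase it as continuity $H^s \to H^{s-2}$ — same idea), then upgrade to $u \in H^{s+2}$ by the elliptic regularity estimate with $t = s$. The paper cites Theorem~\ref{thm.reg.D} directly rather than Corollary~\ref{cor.regularity}, but since $P$ has $W^{\infty,\infty}$ coefficients both give the needed estimate; no substantive difference.
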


\begin{proof} Let $\eta_n \in H^{s+2}(M; E)$, $\eta\in H^{s}(M; E)$, and
  $\zeta \in H^{s}(M; F)$ be such that
  $\eta_n \to \eta$ and $P \eta_n \to \zeta$, the first convergence being in $H^{s}(M; E)$
  and the second one being in $H^{s}(M; F)$.
  Then, since $P$ is continuous in distribution sense, we have $P \eta_n \to P \eta$ in distribution
  sense, and   hence $P\eta = \zeta$ in distribution sense.
  Theorem~\ref{thm.reg.D} for $u = \eta$ and
  $t = s$ gives that $\eta \in H^{s+2}(M; E)$. Hence the graph of $P$ is closed.
\end{proof}

\section{Application: \tps{L^2}{L2}-unique continuation and the deformation operator}
\label{sec5}

We continue to assume that $M$ is a manifold with bounded geometry and that
$E \to M$ is a Hermitian vector bundle with totally bounded curvature.

\subsection{An invertibility result}

The next result states that if $T \in \maD^{2, \infty}(M; E)$ is a uniformly strongly elliptic symmetric
operator on $L^2(M;E)$, on which we consider the inner product $(\cdot ,\cdot )_{L^2(M;E)}$,
then $T$ is essentially self-adjoint, that is, its closure is a self-adjoint operator
on $L^2(M;E)$. Its proof is standard.

\begin{lemma}
\label{elliptic-self-adj}
Let $M$ be a manifold with bounded geometry and $E \to M$ be a Hermitian vector bundle
with totally bounded curvature. Let $T\in \maD^{2, \infty}(M; E)$ be a uniformly strongly elliptic
differential operator and consider the setting of Corollary \ref{cor.closed} for $s=0$. 
If $T\colon H^2(M;E)\to L^2(M;E)$ is symmetric $($that is, $(Tf,g)=(f,Tg)$, for all 
$f,g\in H^2(M;E)${$)$}, then $T$ is self-adjoint.
\end{lemma}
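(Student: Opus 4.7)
The plan is to use the standard criterion that a closed, symmetric operator $T$ is self-adjoint if and only if $\operatorname{dom}(T^*) \subset \operatorname{dom}(T)$. By Corollary~\ref{cor.closed} applied with $s = 0$, the operator $T$ with domain $H^2(M;E)$ is already closed on $L^2(M;E)$, so the self-adjointness question reduces entirely to controlling the domain of $T^*$.

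First I would unpack the symmetry hypothesis. Because $T$ is symmetric on $H^2(M;E)$ and $\CIc(M;E) \subset H^2(M;E)$, the formal (distributional) adjoint of $T$ as a differential operator coincides with $T$ itself on test sections; in particular $T$ is formally self-adjoint. Consequently, for any $u \in L^2(M;E)$, applying $T$ to $u$ in the distributional sense gives the same object as applying the abstract adjoint when that makes sense.

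Next I would take an arbitrary $u \in \operatorname{dom}(T^*)$, meaning $u \in L^2(M;E)$ and there exists $w := T^* u \in L^2(M;E)$ with $(u, Tf)_{L^2} = (w, f)_{L^2}$ for every $f \in H^2(M;E)$. Specializing $f \in \CIc(M;E)$ and using the formal self-adjointness of $T$, this identity says precisely that $Tu = w$ in the distributional sense. Thus $u \in L^2(M;E) = H^0(M;E)$ satisfies $Tu \in L^2(M;E) = H^0(M;E)$.

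Now the main step: apply the regularity theorem. Since $T \in \maD^{2,\infty}(M;E)$ is uniformly strongly elliptic and its coefficients lie in $W^{\rho,\infty}$ for every $\rho$, Corollary~\ref{cor.regularity} (or directly Theorem~\ref{thm.reg.D} with $s = t = 0$) yields $u \in H^2(M;E)$. Hence $\operatorname{dom}(T^*) \subset H^2(M;E) = \operatorname{dom}(T)$, and combined with the symmetry inclusion $T \subset T^*$ this gives $T = T^*$. The only real content is the elliptic regularity input; the rest is the usual abstract manipulation, so I do not expect a serious obstacle once Theorem~\ref{thm.reg.D} is available and $T$ is known to be closed.
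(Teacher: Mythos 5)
Your proof is correct. It differs from the paper's in the choice of self-adjointness criterion, and the difference is worth noting. The paper uses the basic criterion via deficiency indices: for a closed symmetric $T$, it suffices to show $T^* \pm \imath$ are injective. This forces an extra step at the end: after elliptic regularity places $\phi \in \ker(T^*+\imath)$ inside $H^2(M;E) = \operatorname{dom}(T)$, the paper still needs the estimate $0 = \|(T+\imath)\phi\|_{L^2}^2 = \|T\phi\|_{L^2}^2 + \|\phi\|_{L^2}^2$ to conclude $\phi = 0$. You instead use the domain-inclusion criterion: a symmetric $T$ satisfies $T \subset T^*$, so $\operatorname{dom}(T^*) \subset \operatorname{dom}(T)$ already forces $T = T^*$. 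The technical core is the same in both cases --- identify the $L^2$-adjoint with the distributional action of $T$ via formal self-adjointness, then apply Theorem~\ref{thm.reg.D} (or Corollary~\ref{cor.regularity}) with $s=t=0$ to upgrade $u \in L^2$, $Tu \in L^2$ to $u \in H^2$. But your version applies regularity to an arbitrary element of $\operatorname{dom}(T^*)$ rather than to a deficiency vector, and it dispenses entirely with the final positivity estimate, which makes the argument a bit shorter and more transparent. One small bookkeeping check you handled correctly: with the sesquilinear duality convention used in the paper ($V^* = \overline{V}'$), the identity $(u, Tf) = (T^*u, f)$ for test sections $f$ does read off as $Tu = T^*u$ in distributions once $T^\dagger = T$, and that is exactly what feeds the regularity theorem.
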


\begin{proof}
  Let $T^\dagger \colon \CIc(M; E)^* \to \CIc(M; E)^*$ be the adjoint of $T$ in distribution 
  sense (with $V^*:=\overline{V}'$ the conjugate linear dual of $V$). Our
  assumption is that $T^\dagger = T$ as \emph{differential operators acting on
  $\CIc(M; E)$}, and hence also as \emph{differential operators acting on
  distributions}, since $T$ and $T^\dagger$ are continuous on distributions and
  $\CIc(M; E)$ is dense in the space $\CIc(M; E)^*$ of distributions. Let $T^*$ be 
  the adjoint of $T$ in $L^2$-sense. (The pairing between $V$ and $V^*$ is
  sesquilinear.) Since we know that $T$ is closed, by Corollary \ref{cor.closed},
  it is enough to show that $T^* \pm \imath$ are both injective (where $\imath\in \mathbb C$
  is such that $\imath^2=-1$). Let $\phi$ in
  the domain of $T^*$ be such that $(T^* + \imath) \phi = 0$. Since $\phi \in L^2(M; E)$,
  it is a distribution, and $T^\dagger \phi$ is defined. By taking inner products with
  test functions $\psi \in \CIc(M; E)$ (which are in the domain of $T$), we see that
  $$ (T + \imath) \phi = (T^\dagger + \imath) \phi = (T^* + \imath) \phi = 0\,,$$
  in distribution sense. Since $T$ is elliptic of order $2 > 0$, we obtain that
  $T + \imath$ is also elliptic of order $2$. The regularity theorem, Theorem~\ref{thm.reg.D},
  gives that $\phi \in H^2(M; E)$. Hence $\phi$ is in the domain of $T$. But then the usual
  $L^2$-estimate
  \begin{equation*}
    0 = \|(T + \imath ) \phi \|_{L^2}^2 \seq ((T + \imath ) \phi, (T + \imath ) \phi) \seq
    \|T\phi \|_{L^2}^2 + \|\phi\|_{L^2}^2
  \end{equation*}
  gives that $\phi = 0$.
\end{proof}

Recall the definition of the $L^2$-unique continuation property from Definition~\ref{def-L2-cont}.
The most basic example is the following.

\begin{example}
  Let $M_0$ be a connected Riemannian manifold. The exterior derivative
  $d \colon L^2(M_0) \to \CIc(M_0, T^*M_0)'$ has the $L^2$-unique continuation property. Indeed,
  if $d u = 0$, then $u$ is locally constant, and hence constant, since $M_0$
  was assumed to be connected. If, furthermore, $u$ vanishes on some non-empty
  open subset of $M_0$, then this constant must be zero.
\end{example}

The following result is a key technical application of $L^2$-unique continuation
property that allows us to establish the invertibility of our operators
in applications to the Stokes system \cite{KohrNistorWendland1}. It motivates the 
consideration of the $L^2$-unique continuation property. Let $X$ and $Y$ be
Banach spaces. Recall that a (possibly 
unbounded) closed operator $T : \maD(T) \subset X \to Y$ is \emph{Fredholm} if
$T^{-1}(0)$ and $Y/TX$ are finite dimensional. Then its \emph{index} is $\dim T^{-1}(0)
- \dim (Y/TX)$.

\begin{proposition}  \label{Fredholm-invertible}
  Let $M$ be a manifold with bounded geometry and $E \to M$ be a Hermitian vector bundle
  with totally bounded curvature. Let $L\in \maD^{2,\infty}(M; E)$ be a differential operator 
  (so of order $2$) and $V \in W^{\infty,\infty}(M;\End (E))$ (so of order zero) with the 
  following properties:
  \begin{enumerate}[(i)]
    \item \label{item.FI.i} $V, L \ge 0$, more precisely, for all $v\in \CIc(M;E)$,
    \begin{equation*}  
      ( Lv,v) \geq 0 \ \mbox{ and } \ ( Vv,v) \geq 0\,.
    \end{equation*}

    \item \label{item.FI.ii} $L$ is uniformly strongly elliptic
    and has the $L^2$-unique continuation property (Definition~\ref{def-L2-cont}).
     \item \label{item.FI.iii} There exists a non-empty open set ${\maO}\subset M$
    such that, if $v\in L^2(M;E)$ and  $( V v,v) =0$, then $v|_{{\maO}} \seq 0.$
    \item \label{item.FI.iv} $L+V \colon H^2(M;E)\to L^2(M;E)$ is a Fredholm operator.
  \end{enumerate}
Then $L + V \colon H^2(M;E)\to L^2(M;E)$ is invertible.
\end{proposition}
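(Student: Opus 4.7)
The plan is to exploit Fredholmness (iv) together with self-adjointness to reduce invertibility to injectivity, and then to establish injectivity using (i)--(iii) together with the $L^2$-unique continuation property of $L$. First, I would argue that $L$ is self-adjoint as an unbounded operator on $L^2(M;E)$ with domain $H^2(M;E)$: the condition $(Lv,v) \geq 0$ in (i) forces this form to be real-valued on $\CIc(M;E)$, so by polarization $L$ is symmetric there, and Lemma~\ref{elliptic-self-adj} (which rests on Corollary~\ref{cor.closed} and Theorem~\ref{thm.reg.D}) promotes this to genuine self-adjointness. Similarly, $V$ is bounded with $(Vv,v)\geq 0$, which forces $V(x) = V(x)^*$ pointwise in $\End(E_x)$, so $V$ is bounded self-adjoint on $L^2(M;E)$. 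Hence $L+V$ is self-adjoint with domain $H^2(M;E)$.

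For injectivity, let $u \in H^2(M;E)$ with $(L+V)u = 0$ and pair against $u$ in $L^2$:
\[ (Lu,u) + (Vu,u) = 0.\]
Both summands are non-negative by (i), so each vanishes separately. From $(Vu,u) = 0$, assumption (iii) gives $u|_{\maO} = 0$. From $(Lu,u) = 0$ together with the functional calculus for the self-adjoint non-negative operator $L$, one has $\|L^{1/2}u\|_{L^2}^2 = (Lu,u) = 0$, hence $L^{1/2}u = 0$ and therefore $Lu = 0$. Now $u \in L^2(M;E)$, $Lu = 0$, and $u$ vanishes on the non-empty open set $\maO$, so the $L^2$-unique continuation property of $L$ from (ii) forces $u \equiv 0$ on $M$.

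For surjectivity, the Fredholm property (iv) ensures that $\mathrm{range}(L+V)$ is closed in $L^2(M;E)$. Self-adjointness of $L+V$ then gives
\[ \mathrm{range}(L+V)^{\perp} \ =\ \ker((L+V)^*) \ =\ \ker(L+V) \ =\ \{0\},\]
so $\mathrm{range}(L+V) = L^2(M;E)$. Combined with injectivity, $L+V \colon H^2(M;E) \to L^2(M;E)$ is a continuous bijection between Banach spaces, hence invertible with bounded inverse.

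The main obstacle I anticipate is the implication $(Lu,u) = 0 \Rightarrow Lu = 0$, which requires that $L$ be genuinely self-adjoint (and not merely formally symmetric) in order to invoke the functional calculus; this is precisely what Lemma~\ref{elliptic-self-adj} delivers. With that ingredient in hand, the remainder is a routine synthesis of Fredholm theory, self-adjointness, and the hypothesized $L^2$-unique continuation property.
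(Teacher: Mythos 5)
Your proof is correct and takes essentially the same route as the paper: establish self-adjointness of $L$ and $L+V$ via Lemma~\ref{elliptic-self-adj}, use the non-negativity to split $(Lu,u)=(Vu,u)=0$, deduce $Lu=0$ via $L^{1/2}$ and $u|_{\maO}=0$ via (iii), and then invoke unique continuation to get $u=0$. The only cosmetic differences are that the paper argues by contradiction using ``self-adjoint Fredholm has index $0$'' while you argue surjectivity directly via $\mathrm{range}(L+V)^\perp=\ker(L+V)=\{0\}$, and that the paper applies the self-adjointness lemma to $L+V$ directly (after noting $L+V$ is also uniformly strongly elliptic) whereas you obtain it from the bounded self-adjoint perturbation $V$ of the self-adjoint $L$; both variants are sound.
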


\begin{proof}
Since $V$ has order smaller than the order of $L$ and $L$ is uniformly strongly
elliptic, by assumption, it follows that $L+V$ is also uniformly strongly elliptic.
We then notice that, since $L$ and $L + V$ are both uniformly strongly elliptic, 
both have domain $H^2(M;E)$ and are closed as unbounded operators on $L^2(M; E)$
by Corollary \ref{cor.closed}. The positivity of both $L$
and $V$ implies also the positivity of $L + V$. (We obtain positivity on the whole
domain since $\CIc(M; E)$ is dense in $H^2(M; E)$ in the graph norm.) In turn, positivity
implies that $L$ and $L+V$ are symmetric (see, for instance, Proposition 2.12 in \cite{ConwayBook}). 
Lemma~\ref{elliptic-self-adj} then gives that both $L$ and
$L+V$ are self-adjoint. Since $L+V$ is Fredholm and self-adjoint, its index will be 0.

Let us assume, by contradiction, that $L + V$ is \emph{not} invertible. Since it is
Fredholm of index zero, it must have a non-zero kernel. Thus, there exists $u \in H^2(M; E)$,
$u\neq 0$, such that $(L+V)u = 0$. Then
\[ (Lu,u) + (Vu,u) =0\,.\]
The relations $(Lu,u) \geq 0$, $(Vu,u) \geq 0$,
and the self-adjointness of $L$, as unbounded operator on $L^2(M;E)$, yield
\[\|L^{\frac{1}{2}}u\|^2=( Lu,u) =0 \ \mbox{ and } \ ( V u,u) =0\,.\]
Hence, we obtain $Lu=0$ on $M$.
Moreover, assumption \eqref{item.FI.iii} and the relation $(V u,u) =0$
imply that $u=0$ on the open subset ${\maO}$ of $M$. The unique continuation property satisfied by the
operator $L$ finally shows that $u = 0$ on $M$, which is a contradiction with the initial assumption that
$u \neq 0$ is in the kernel of $L + V$. The proof is complete.
\end{proof}

See \cite{Mitrea-Nistor} for a motivation and references related
to the above result. We shall need the following simple lemma.

\begin{lemma}\label{lemma-ucT*T}
  Let $M$ be a manifold with bounded geometry and $E, F \to M$ be Hermitian vector bundles with
  totally bounded curvature. Let $T \in \maD^{1,\infty}(M; E, F)$ have the
  $L^2$-unique continuation property. If $T^*T$ is uniformly strongly elliptic, then it also has the $L^2$-unique
  continuation property.
\end{lemma}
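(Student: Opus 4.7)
The plan is to reduce the $L^2$-unique continuation property of $T^*T$ directly to that of $T$, via a regularity step and an integration by parts. Suppose $u \in L^2(M;E)$ satisfies $T^*Tu = 0$ (in distribution sense) and vanishes on a non-empty open subset $U \subset M$. My first move is to upgrade the regularity of $u$: since $T \in \maD^{1,\infty}$, the composition $T^*T$ lies in $\maD^{2,\infty}(M;E)$ and is uniformly strongly elliptic by hypothesis. Then Corollary~\ref{cor.regularity} applied to $s = t = 0$ immediately yields $u \in H^2(M;E)$. In particular, $Tu \in H^1(M;F) \subset L^2(M;F)$.

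Next I would show that $Tu \equiv 0$. The idea is to compute $\|Tu\|_{L^2}^2$ by pairing against $u$. For smooth compactly supported $\varphi, \psi \in \CIc(M;E)$, the defining identity $(T^*T \varphi, \psi)_{L^2} = (T\varphi, T\psi)_{L^2}$ holds. Both sides extend continuously to $H^2(M;E) \times H^2(M;E)$: the left side because $T^*T \colon H^2 \to L^2$ is continuous, and the right side because $T \colon H^2 \to H^1$ is continuous. Since $\CIc(M;E)$ is dense in $H^2(M;E)$ on a manifold with bounded geometry, the identity extends to all $u, v \in H^2(M;E)$. Taking $v = u$ and using $T^*Tu = 0$, we get
\begin{equation*}
  \|Tu\|_{L^2(M;F)}^2 \ = \ (Tu, Tu)_{L^2} \ = \ (T^*Tu, u)_{L^2} \ = \ 0,
\end{equation*}
so $Tu = 0$ as an element of $L^2(M;F)$, hence also in distribution sense.

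Finally, $u \in L^2(M;E)$ satisfies $Tu = 0$ and vanishes on the open subset $U$. The $L^2$-unique continuation property of $T$, which is assumed, then forces $u \equiv 0$ on $M$. This is precisely what we had to prove.

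The only genuinely non-routine step is the integration by parts that yields $\|Tu\|^2 = (T^*Tu, u)$, and even this reduces to a density argument: once one knows that $\CIc(M;E)$ is dense in $H^2(M;E)$ on a manifold with bounded geometry (a standard consequence of the partition of unity description in Proposition~\ref{prop.part.unity} together with standard cutoff constructions), both sides of the identity are jointly continuous in $(u,v) \in H^2 \times H^2$ and the extension is automatic. The regularity step and the final application of the hypothesis are then immediate.
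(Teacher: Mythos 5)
Your proof is correct and follows essentially the same path as the paper's: upgrade $u$ to $H^2$ via elliptic regularity (Corollary~\ref{cor.regularity}), integrate by parts to conclude $Tu = 0$, and invoke the $L^2$-unique continuation property of $T$. The only difference is that you spell out the density argument justifying $\|Tu\|_{L^2}^2 = (T^*Tu, u)$, which the paper treats as immediate.
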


\begin{proof}
    Let $u \in L^2(M; E)$ be such that $T^*T u = 0$
    and $u = 0$ on some non-empty open set $\maO \subset M$. Since $T^*T$
    is uniformly strongly elliptic, we obtain that $u \in H^{2}(M; E)$ by elliptic regularity
    (see Theorem~\ref{thm.reg.D} or Corollary~\ref{cor.regularity}). Therefore, we have
    \begin{equation*}
      \|Tu \|^2_{L^2} \seq (Tu, Tu)  \seq (T^*Tu, u) \seq 0\,.
    \end{equation*}
    Hence $Tu = 0$. The $L^2$-unique continuation property for $T$ then gives $u = 0$.
\end{proof}

\subsection{Vanishing of Killing vector fields}
We shall need the following lemma (a stronger result can be found in
\cite[Prop. 8.1.4]{Petersen}, we include, nevertheless a simple proof, for the
benefit of the reader):

\begin{lemma}\label{lemma.Killing}
   Let $M_0$ be a connected Riemannian manifold with positive injectivity radius
  (and hence complete).
      Let $\maO \subset M_0$ be a closed subset with non-empty
    interior and with the following property:
    \begin{center} \emph{ ``If $\gamma \colon \RR \to M_0$ is a (complete)
      geodesic that is minimizing between two points $a, b \in \maO$,
      then $\gamma \subset \maO$.''}
    \end{center}
    Then $\maO = M_0$.
\end{lemma}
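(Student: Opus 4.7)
The plan is to argue by contradiction. Letting $V \ede M_0 \setminus \maO$, which is open since $\maO$ is closed, I would assume $V \neq \emptyset$ and seek a contradiction. The rough idea is to travel from the interior of $\maO$ toward a point of $V$ along a minimizing geodesic and trap the ``first exit point'' inside $\maO$, so that the geodesic hypothesis can be applied there.

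To set this up, I would first invoke that $\rinj(M_0) > 0$ forces $M_0$ to be geodesically complete, so by Hopf--Rinow any two points are joined by a minimizing geodesic. Picking $q$ in the interior of $\maO$ and $p \in V$, I would let $\gamma \colon \RR \to M_0$ be a complete unit-speed geodesic with $\gamma(0) = q$ and $\gamma(T) = p$ for some $T > 0$, such that $\gamma|_{[0,T]}$ is minimizing between its endpoints. I would then consider
\[
T_0 \ede \sup\{\, t \in [0,T] : \gamma([0,t]) \subset \maO\,\}.
\]
The points here are that $T_0 > 0$ (because $q$ is interior to $\maO$), that $T_0 < T$ (because $V$ is open and contains $p = \gamma(T)$), and that $p^* \ede \gamma(T_0) \in \maO$ by closedness of $\maO$ (since $\gamma([0,t]) \subset \maO$ for all $t<T_0$ and $\maO$ is closed).

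The payoff step is to observe that $\gamma|_{[0,T_0]}$, being a sub-arc of a minimizing geodesic, is itself minimizing between the two points $q,\, p^* \in \maO$. The hypothesis of the lemma then applies to the complete geodesic $\gamma$ and forces $\gamma(\RR) \subset \maO$. In particular $p = \gamma(T)$ would lie in $\maO$, contradicting $p \in V$. The only subtle point in this plan is manufacturing the ``first exit'' $p^*$: the hypothesis needs \emph{both} endpoints of the minimizing segment to lie in $\maO$, so one cannot feed it the pair $(q,p)$ directly; the supremum argument is exactly what replaces the bad endpoint $p$ by a point $p^* \in \maO$ while keeping the connecting sub-geodesic minimizing and keeping $\gamma$ as the ambient complete geodesic to which the hypothesis gets applied.
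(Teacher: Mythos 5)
Your proof is correct, but it follows a genuinely different route from the paper's. The paper runs a local ``clopen'' argument on the interior $\maO_0$ of $\maO$: it first shows that for every $x_0 \in \maO_0$ the whole ball $B^{M_0}_{r_0}(x_0)$, with $r_0 := \rinj(M_0)$, lies in $\maO_0$ --- each $y$ in that ball is reached from $x_0$ along a minimizing geodesic whose initial short piece sits in the open set $\maO_0$, so the hypothesis pushes the whole geodesic, and in particular $y$, into $\maO$ --- and then deduces that $\maO_0$ is also closed by the symmetric observation that any $y \in \overline{\maO_0}$ satisfies $y \in B^{M_0}_{r_0}(x_0) \subset \maO_0$ for some $x_0 \in B^{M_0}_{r_0}(y) \cap \maO_0$; connectedness then gives $\maO_0 = M_0$. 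Your argument is instead a ``first-exit'' contradiction: join an interior point $q$ of $\maO$ to a hypothetical exterior point $p$ by a minimizing geodesic (Hopf--Rinow, using the completeness that $\rinj(M_0)>0$ supplies), locate the first exit time $T_0$, and feed the pair $(q, \gamma(T_0))$ into the hypothesis. Both proofs are sound, and both are, at bottom, connectedness arguments; the difference in what they buy is that the paper's proof uses the uniform lower bound $r_0$ on the injectivity radius directly (especially in the closedness step), while yours only uses it to secure geodesic completeness, so your argument would run verbatim on any connected, complete Riemannian manifold. You correctly flag and resolve the one subtlety (manufacturing a second endpoint $p^*$ inside $\maO$); it is also worth noting that both proofs implicitly need the two endpoints fed into the hypothesis to be distinct --- you ensure this via $T_0 > 0$, the paper by taking $t > 0$ small.
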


\begin{proof}
  Let $\maO_0 \subset \maO$ be the interior of $\maO$, which is known to be non-empty
  by hypothesis. Let us show that $\maO_0$ is also closed in $M_0$.
  For $x \in M_0$, we shall let  $B_{r}^{M_0}(x) \subset M_0$ denote the open ball with center $x$ and radius $r$
  (the set of points $y \in M_0$ at distance $<r$ to $x$). Let
  $r_0 > 0$ be the injectivity radius of $M_0$, we claim that
  \begin{equation}\label{eq.claim}
    x_0 \in \maO_0 \ \Rightarrow\ B_{r_0}^{M_0}(x_0) \subset \maO_0\,.
  \end{equation}
  Indeed, to prove the claim \eqref{eq.claim}, let $y \in B_{r_0}^{M_0}(x_0, r_0)$.
  Then there is a minimizing geodesic $\gamma  \colon \RR \to M_0$ joining $y$ to $x_0$. We can assume $\gamma(0) = x_0$.
  Then, for $t$ small enough, $\gamma(t) \in \maO_0$ and $\gamma$ is minimizing between $x_0 = \gamma(0)$
  and $\gamma(t)$. It follows by assumption that $\gamma \subset \maO$ and hence, in particular, $y \in \maO$.
  This shows that, for every $x_0 \in \maO_0$, the ball $B_{r_0}^{M_0}(x_0) \subset \maO$. Hence also
  $B_{r_0}^{M_0}(x_0) \subset \maO_0$ since $B_{r_0}^{M_0}(x_0)$ is open. We have thus proved our claim, 
  that is, Equation \eqref{eq.claim}.
  
  Let us next show that the proven claim implies that $\maO_0$
  is closed. Indeed, let $y \in \overline{\maO}_0$. Then $B^{M_0}_{r_0}(y)\cap \maO_0$ 
  is non-empty. Let $x_0
  \in B_{r_0}^{M_0}(y)\cap \maO_0$. Then $y \in B_{r_0}^{M_0}(x_0)\subset \maO_0$. Thus $\maO_0$ is
  closed, as stated.
  
  This completes the proof. Indeed, $M_0$ is connected and $\maO_0 \subset M_0$ is open, closed, and 
  non-empty, it follows that $\maO_0 = M_0$. This proves that $\maO = M_0$ as well, and concludes the proof.
\end{proof}

Let $\mathfrak{L}_X$ denote the Lie derivative in the direction of a vector field $X$. Then the
deformation tensor $\Def X$ is defined as
\begin{align}\label{Def}
	\Def X=\frac{1}{2}\mathfrak{L}_Xg
\end{align}
(see \cite[Chapter 2 \S 3]{Taylor1}). Thus, $\Def X$ is a symmetric tensor field of type $(0,2)$,
and $\Def$ will be viewed as an unbounded operator from $L^2(TM)$ to $L^2(T^*M\otimes T^*M)$.
Let also $\Def^*$ denote the adjoint of the deformation operator $\Def$. If $\Def X=0$, then
$X$ is called a \emph{Killing vector field.}

The next corollary will provide us with the $L^2$-unique continuation property for the 
deformation operator $\Def$.

\begin{corollary}
\label{cor.item.lK.b}
Let $M_0$ be a connected Riemannian manifold with positive injectivity radius.
Let $X$ be a smooth \emph{Killing} vector field on $M_0$ that vanishes on a non-empty open
subset of $M_0$. Then $X = 0$ $($i.e. it vanishes on $M_0${$)$}.
\end{corollary}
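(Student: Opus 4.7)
The plan is to apply Lemma~\ref{lemma.Killing} to the zero set of the full $1$-jet of $X$. Concretely, I would define
\[
    \maO \define \{x \in M_0 \mid X(x) = 0 \text{ and } (\nabla X)(x) = 0\}.
\]
By continuity of $X$ and of $\nabla X$, this set $\maO$ is closed in $M_0$. Moreover, if $U \subset M_0$ denotes the given non-empty open subset on which $X$ vanishes identically, then $\nabla X$ also vanishes throughout $U$, so $U \subset \maO$, which shows that $\maO$ has non-empty interior. This takes care of the ``closed with non-empty interior'' part of the hypothesis of Lemma~\ref{lemma.Killing}.

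Next, I would verify the geodesic hypothesis of Lemma~\ref{lemma.Killing}: if $\gamma\colon \RR \to M_0$ is a complete geodesic that is minimizing between two points $a, b \in \maO$, then $\gamma \subset \maO$. The key input is the classical fact that a Killing vector field is uniquely determined by its $1$-jet at any single point. Concretely, for a Killing field $X$ one has a pointwise identity of the form $\nabla_v \nabla_w X - \nabla_{\nabla_v w} X = R(X,v)\,w$ (see, e.g., \cite[Chapter 8]{Petersen}), which, combined with the tautological relation $\nabla_{\dot\gamma} X = (\nabla X)(\dot\gamma)$, implies that along $\gamma$ the pair $\bigl(X \circ \gamma,\, (\nabla X)\circ \gamma\bigr)$ satisfies a first-order linear ODE with values in the pulled-back bundle $\gamma^{*}(TM_0 \oplus \End(TM_0))$. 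Taking $\gamma(0) = a$, the initial data vanish by the very definition of $\maO$, so by uniqueness of solutions of linear ODEs both $X$ and $\nabla X$ vanish identically along $\gamma$. In particular $\gamma \subset \maO$; note that the condition $b \in \maO$ is then automatic rather than an extra assumption.

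Combining the two preceding paragraphs, Lemma~\ref{lemma.Killing} applies and yields $\maO = M_0$, which gives $X \equiv 0$ on $M_0$, as required. The main technical step is the second one, namely identifying a first-order linear ODE for the $1$-jet of a Killing field along a geodesic via the Killing identity; once this is in hand, the verification of the hypotheses of Lemma~\ref{lemma.Killing} and the deduction that $X$ vanishes are essentially formal.
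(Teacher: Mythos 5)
Your proof is correct, but it takes a genuinely different route from the paper's. The paper defines $\maO$ simply as the zero set of $X$ and uses that the local flow $\phi_t$ generated by a Killing field consists of (local) isometries: since $\phi_t$ fixes $\maO$ pointwise, it fixes the endpoints of any minimizing geodesic between two points of $\maO$, hence fixes the geodesic itself (this tacitly uses the uniqueness of short minimizing geodesics, which is what the proof of Lemma~\ref{lemma.Killing} actually needs), and therefore $X$ vanishes on that geodesic. You instead take $\maO$ to be the zero set of the full $1$-jet of $X$ and use the pointwise Killing identity to derive a first-order linear ODE for $\bigl(X\circ\gamma,(\nabla X)\circ\gamma\bigr)$ along any geodesic $\gamma$, concluding by ODE uniqueness. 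Your argument is somewhat more elementary (no appeal to the flow existing or being isometric), and in fact proves more than is needed: it shows $X$ and $\nabla X$ vanish along \emph{every} geodesic emanating from a point of $\maO$, not merely minimizing ones, so in your setting the hypothesis $b\in\maO$ in Lemma~\ref{lemma.Killing} is never used (as you observe). It also sidesteps the mild subtlety of possible non-uniqueness of minimizing geodesics that the paper's flow argument brushes past. The paper's approach is shorter and more geometric in flavor, relying on the global statement ``Killing field $\Rightarrow$ isometric flow,'' which is itself usually proved via the same ODE for the $1$-jet; so you have, in effect, unpacked one ingredient. Both are valid proofs of the corollary.
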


\begin{proof}
  Let $\maO\subset M_0$ be the set of points where $X = 0$,
  which contains an open subset of $M_0$, by assumption. Since $X$ is smooth and $M_0$ is complete, 
  $X$ generates a \emph{local} flow $\phi_t \colon M_0 \to M_0$, with $\phi_t(x)$ defined for
  $t \in (-\epsilon_x, \epsilon_x)$, for some $\epsilon_x > 0$. 
   Since $X$ is a Killing vector field, the local flow $\phi_t$
  consists of (partially defined) isometries. Moreover, $\phi_t = id$ on $\maO$ (since $X = 0$ there).
  We conclude that $\phi_t$ fixes any minimizing
  geodesic between two points of $\maO$. Hence any (complete) geodesic minimizing the distance between
  two points of $\maO$ is contained in $\maO$. Lemma~\ref{lemma.Killing}
  then gives that $\maO = M_0$. Hence $X = 0$ everywhere on $M$.
\end{proof}

\subsection{Applications to the deformation operator}
We continue to assume that $M$ is a manifold with bounded geometry. Let $g$ be a
Riemannian metric tensor on $M$.
We now prove our main application to the deformation operator.

\begin{theorem}
  \label{Killing-non-compact}
  Let $M$ be a manifold with bounded geometry.
  Then the following statements hold.
    \begin{enumerate}[(i)]
    \item\label{item.Knc.a}
    We have $\Def \in \maD^{1,\infty}(M; TM, T^*M \otimes T^*M)$.

    \item\label{item.Knc.b}
    The deformation Laplacian $\mathbf{L} \ede 2\operatorname{Def}^{*}\Def$ is 
    uniformly strongly elliptic.

    \item\label{item.Knc.c}
    If $M$ is connected, then $\Def$ and $\mathbf{L} \ede 2\operatorname{Def}^{*}\Def$ have
    the $L^2$-unique continuation property on $M$.
    \end{enumerate}
\end{theorem}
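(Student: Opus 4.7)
My plan is to treat the three parts in order; (i) and (ii) are essentially computations while (iii) carries the substantive content. For (i), I would express $\Def$ via the Levi-Civita connection using the standard identity
\[ (\Def X)(Y, Z) \seq \tfrac{1}{2}\bigl( g(\nabla_Y X, Z) + g(\nabla_Z X, Y) \bigr)\,, \]
so that $\Def$ equals $\nabla^{TM}$ post-composed with the symmetrize-and-lower bundle morphism determined by $g$. Since $M$ has bounded geometry, $g$ has uniformly bounded covariant derivatives of all orders, so this bundle morphism lies in $W^{\infty,\infty}$, yielding $\Def \in \maD^{1,\infty}(M; TM, T^*M \otimes T^*M)$. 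For (ii), I would read off the principal symbol
\[ \sigma_1(\Def)(\eta) X \seq \tfrac{1}{2}\bigl( \eta \otimes X^\flat + X^\flat \otimes \eta \bigr)\,, \]
and then compute
\[ \|\sigma_1(\Def)(\eta) X\|^2 \seq \tfrac{1}{2}\bigl(\|\eta\|^2 \|X\|^2 + \langle \eta^\sharp, X\rangle^2\bigr) \,\ge\, \tfrac{1}{2}\|\eta\|^2 \|X\|^2\,. \]
Since $\sigma_2(\mathbf{L})(\eta) = 2\,\sigma_1(\Def)(\eta)^*\sigma_1(\Def)(\eta)$, this yields $(\sigma_2(\mathbf{L})(\eta) X, X) \ge \|\eta\|^2 \|X\|^2$, and hence uniform strong ellipticity in the sense of Definition \ref{def.strongly.elliptic}.

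For part (iii), the plan is to first establish the $L^2$-unique continuation property for $\Def$ itself, and then deduce it for $\mathbf{L}$ by invoking Lemma \ref{lemma-ucT*T} with $T = \Def$ (its hypothesis that $T^*T = \tfrac{1}{2}\mathbf{L}$ is uniformly strongly elliptic is precisely (ii)). So let $X \in L^2(M; TM)$ satisfy $\Def X = 0$ in distribution sense and $X|_U \seq 0$ on some non-empty open set $U \subset M$. Applying $\Def^*$ (continuous on distributions) gives $\mathbf{L} X = 0$ distributionally. Since $\mathbf{L} \in \maD^{2,\infty}$ is uniformly strongly elliptic with smooth coefficients and $X \in L^2 = H^0$, iteratively applying Corollary \ref{cor.regularity} puts $X$ in $H^s(M; TM)$ for every $s \in \NN$, and Sobolev embedding on manifolds with bounded geometry then gives $X \in \CI(M; TM)$. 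Thus $X$ is a smooth Killing vector field on the connected Riemannian manifold $M$, which has $\rinj(M) > 0$ by bounded geometry, and vanishes on the open set $U$. Corollary \ref{cor.item.lK.b} then gives $X \equiv 0$ on $M$, as required.

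The main obstacle lies in part (iii): the passage from an a priori $L^2$ distributional solution to the smooth classical Killing vector field to which Corollary \ref{cor.item.lK.b} can be applied. This is where the bounded geometry hypothesis enters essentially, through Corollary \ref{cor.regularity} and the accompanying uniform Sobolev embedding. The one subtle point I anticipate is confirming that the formal adjoint $\Def^*$ is continuous on distributions so that $\mathbf{L} X = 0$ genuinely holds in distribution sense; this is standard for $\nabla$-differential operators with $W^{\infty,\infty}$ coefficients and should not present a serious difficulty, but it must be checked.
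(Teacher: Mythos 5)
Your proposal is correct and follows essentially the same route as the paper: establish uniform strong ellipticity of $\mathbf{L}$, use it with Corollary~\ref{cor.regularity} to upgrade an $L^2$ Killing field to a smooth one, invoke Corollary~\ref{cor.item.lK.b}, and then pass to $\mathbf{L}$ via Lemma~\ref{lemma-ucT*T}. The only difference is that in (i) and (ii) you work out the bundle morphism and the principal symbol $\sigma_1(\Def)(\eta)X = \tfrac{1}{2}(\eta \otimes X^\flat + X^\flat \otimes \eta)$ explicitly, whereas the paper cites \cite{KohrNistor1} and Taylor's formula $\sigma_2(\mathbf{L})(\eta) = |\eta|^2(1 + P_\eta)$; the two computations agree.
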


\begin{proof}
  \eqref{item.Knc.a} follows from formula \eqref{Def}
  and the results in \cite{KohrNistor1} (see, for instance, \cite[Proposition~5.12]{KohrNistor1}, 
  noting that manifolds with bounded geometry satisfy the (FFC) condition
  by the results in \cite{GN17}, Lemma 3.1).

  \eqref{item.Knc.b} The operator $\mathbf{L} \ede 2\operatorname{Def}^{*}\Def$ is in
  $\maD^{2,\infty}(M; \End(TM))$ by point \eqref{item.Knc.a}
  already proved and \cite[Proposition 4.5]{KohrNistor1}.
  The principal symbol of $\mathbf{L}$
  (a second order differential operator) is given by \cite[Chapter 5 \S 12, (12.20)]{Taylor1}
  \begin{equation*}
     \sigma _2(\mathbf{L})(\eta )=|\eta|^2({1 }+P_{\eta})\geq |\eta|^2\,,\ \forall\, \eta\in T_x^*M\setminus \{0\},
  \end{equation*}
  where $P_{\eta}(u):=|\eta|^{-2}( \eta,u) \eta$ is the orthogonal projection of $u$ along $\eta$ (regarded
  as a vector) and we use the notation $1$ for the identity operator.
  Hence $\mathbf{L}$ is a uniformly strongly elliptic operator, as asserted.

  \eqref{item.Knc.c} Let us assume that $M$ is a connected manifold and let $X \in L^2(M,TM)$ be such
  that $\Def X =0$ on $M$. Then $\operatorname{Def}^{*}\Def X = 0$. Since $\operatorname{Def}^{*}\Def$
  is uniformly strongly elliptic by point \eqref{item.Knc.b} already proved, elliptic regularity
  (Corollary~\ref{cor.regularity}) gives
  that $X$ is smooth. Since $M$ has positive injectivity radius, Lemma~\ref{lemma.Killing} and,
  more precisely, Corollary~\ref{cor.item.lK.b}, give
  that $X = 0$. Hence $\Def$ has the $L^2$ unique continuation property. Lemma~\ref{lemma-ucT*T}, in
  conjunction again with point \eqref{item.Knc.b} already proved, gives then that $\operatorname{Def}^{*}\Def$
  also has the $L^2$ unique continuation property.
\end{proof}

Combining with Proposition \ref{Fredholm-invertible}, we obtain the following consequence.

\begin{corollary}  \label{cor.Fredholm-invertible} 
  Let $M$ be a \emph{connected} manifold with bounded geometry 
  and $\mathbf{L} := 2 \operatorname{Def}^* \Def$, as before. Let 
  $V \in W^{\infty,\infty}(M;\End (E))$ with the following properties:
  \begin{enumerate}[$(i)$]
    \item \label{item.cFI.i} $V \ge 0$.

    \item \label{item.cFI.ii} There exists a non-empty open subset ${\maO}\subset M$
    such that, if $v\in L^2(M;E)$ and  $( V v,v) =0$, then $v|_{{\maO}} \seq 0.$

  \item \label{item.cFI.iii} $\mathbf{L} + V \colon H^2(M;E)\to L^2(M;E)$ is a Fredholm operator.
  \end{enumerate}
Then $\mathbf{L} + V \colon H^2(M;E)\to L^2(M;E)$ is invertible.
\end{corollary}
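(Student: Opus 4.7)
The plan is to apply Proposition~\ref{Fredholm-invertible} directly, taking $L := \mathbf{L} = 2\Def^*\Def$ (so $E = TM$ in the notation of the proposition) and using the given $V$. The task then reduces to verifying the four hypotheses (i)--(iv) of that proposition, each of which is either a simple consequence of what has already been proved or a direct translation of one of the corollary's hypotheses.

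First, I would verify the positivity condition (i) of Proposition~\ref{Fredholm-invertible}: for every $v \in \CIc(M; TM)$ we have
\[
(\mathbf{L} v, v) \seq 2(\Def^*\Def v, v) \seq 2 \|\Def v\|_{L^2}^2 \ge 0,
\]
so $\mathbf{L} \ge 0$ on the dense subspace $\CIc(M; TM) \subset H^2(M; TM)$, which suffices; the positivity $V \ge 0$ is given by hypothesis (i) of the corollary. Next, Theorem~\ref{Killing-non-compact}(\ref{item.Knc.b}) shows that $\mathbf{L}$ is uniformly strongly elliptic, and Theorem~\ref{Killing-non-compact}(\ref{item.Knc.c}), invoking the connectedness of $M$, shows that $\mathbf{L}$ has the $L^2$-unique continuation property; together these give condition (ii) of Proposition~\ref{Fredholm-invertible}.

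Condition (iii) of Proposition~\ref{Fredholm-invertible} is precisely hypothesis (ii) of the corollary, and condition (iv), the Fredholm property of $\mathbf{L} + V \colon H^2(M; TM) \to L^2(M; TM)$, is precisely hypothesis (iii) of the corollary. With all four conditions in place, Proposition~\ref{Fredholm-invertible} yields the invertibility of $\mathbf{L} + V$, which is the desired conclusion.

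There is no real obstacle here: the substantive analytic content (uniform strong ellipticity of $\mathbf{L}$, $L^2$-unique continuation via Corollary~\ref{cor.item.lK.b}, closedness and self-adjointness via Corollary~\ref{cor.closed} and Lemma~\ref{elliptic-self-adj}, and the Fredholm-of-index-zero argument) has already been carried out in Theorem~\ref{Killing-non-compact} and Proposition~\ref{Fredholm-invertible}. The corollary is thus essentially a bookkeeping exercise of matching hypotheses, the only mildly delicate point being the implicit identification $E = TM$ so that the general framework of Proposition~\ref{Fredholm-invertible} applies to the deformation Laplacian.
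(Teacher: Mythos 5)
Your proposal is correct and matches the paper's own (implicit, one-line) argument: the paper simply states ``Combining with Proposition~\ref{Fredholm-invertible}, we obtain the following consequence'' and leaves the hypothesis-checking to the reader. Your verification of the four conditions of Proposition~\ref{Fredholm-invertible}---positivity of $\mathbf{L}$ via $(\mathbf{L}v,v)=2\|\Def v\|^2_{L^2}\ge 0$, uniform strong ellipticity and $L^2$-unique continuation from Theorem~\ref{Killing-non-compact} (using connectedness), and the direct translation of hypotheses (ii) and (iii) of the corollary into conditions (iii) and (iv) of the proposition, with $E=TM$---is exactly what is intended.
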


There is an analogous definition of $W^{s,p}$-unique continuation property:

\begin{definition} 
  Let $T$ be pseudodifferential operator acting from sections of $E$ to sections of $F$.
  Let us assume that $T$ extends by continuity to a map $T : W^{s, p}(M; E) \to \maD(M; F)'$. 
  We shall say that $T$ has the \textit{$W^{s, p}$-unique continuation property} if, 
  given $u \in W^{s,p}(M; E)$ that vanishes in an open subset $U \subset M_0$ and 
  satisfies $T  u \seq 0$, then $u \seq 0$.
\end{definition}

Then the same proof gives as the proof of the $L^2$-unique continuation property for
$\Def$ in Theorem \ref{Killing-non-compact}\eqref{item.Knc.c} gives the following.

\begin{corollary}
  \label{cor.Def}
  Let $M_0$ be a connected manifold with bounded geometry and $s \in \RR_+$,
  then $\Def$ has the $H^s$-unique continuation property. 
\end{corollary}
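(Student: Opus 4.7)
My plan is to follow the proof of Theorem \ref{Killing-non-compact}\eqref{item.Knc.c} line by line, observing that every ingredient it uses remains available when the hypothesis ``$X \in L^2$'' is replaced by ``$X \in H^s$'' for $s \in \RR_+$. First I would let $X \in H^s(M_0; TM_0)$ satisfy $\Def X = 0$ distributionally and vanish on a non-empty open subset $U \subset M_0$, and then apply $\Def^*$ distributionally to obtain $\mathbf{L} X = 2\Def^*\Def X = 0$.

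Next, since $\mathbf{L}$ is uniformly strongly elliptic with smooth coefficients (Theorem \ref{Killing-non-compact}\eqref{item.Knc.b}), I would invoke Corollary \ref{cor.regularity} iteratively, starting from $t = s$ and bootstrapping via $t \mapsto t+2$, to conclude that $X \in H^k(M_0; TM_0)$ for every $k \in \NN$, hence that $X$ is smooth. The equation $\Def X = 0$ then identifies $X$ as a smooth Killing vector field on the connected manifold $M_0$, which has positive injectivity radius by the bounded-geometry hypothesis; Corollary \ref{cor.item.lK.b} then forces $X \equiv 0$ on $M_0$.

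The only potential snag is bookkeeping: one must confirm that $\Def$ extends by continuity to $H^s$, which is immediate since $\Def \in \maD^{1,\infty}$ defines a bounded map $H^s \to H^{s-1}$, and that the elliptic bootstrap does not stall at low starting regularity, which is guaranteed by the fact that Corollary \ref{cor.regularity} allows arbitrary $t \in \RR$ in the remainder term. A more compact alternative would be to observe the continuous embedding $H^s(M_0; TM_0) \hookrightarrow L^2(M_0; TM_0)$ for $s \ge 0$ (immediate from the partition-of-unity characterization in Proposition \ref{prop.part.unity}) and simply reduce directly to Theorem \ref{Killing-non-compact}\eqref{item.Knc.c}; I do not expect any substantive obstacle either way.
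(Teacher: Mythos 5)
Your first approach is exactly what the paper has in mind (the paper simply asserts that "the same proof gives" the result): apply $\Def^*$ to obtain $\mathbf{L}X=0$, use the uniform strong ellipticity of $\mathbf{L}$ together with Corollary~\ref{cor.regularity} (where $t\in\RR$ is allowed) to deduce that $X$ is smooth, and conclude via Corollary~\ref{cor.item.lK.b}. Your second approach is even cleaner: for $s\ge 0$ one has $H^s(M_0;TM_0)\hookrightarrow L^2(M_0;TM_0)$, so the hypothesis immediately places $X$ in $L^2$ and Theorem~\ref{Killing-non-compact}\eqref{item.Knc.c} applies verbatim; this is a legitimate shortcut that spares re-running the regularity argument, and both routes are correct.
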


Generalizations of this kind are possible for other statements, when suitable
reformulated. For instance, the problem in Lemma \ref{lemma-ucT*T} is to specify 
which adjoint one uses. One can still use the adjoint with respect to the $L^2$-inner
product (not with respect to $H^s$), then, as a consequence, one would obtain also 
that $\mathbf{L} := \operatorname{Def}^*\Def$ also has the $H^s$-unique continuation 
property.

\def\cprime{$'$}

\end{document}